\DeclareFontFamily{OMX}{mlmex}{}
\DeclareFontShape{OMX}{mlmex}{m}{n}{%
   <->mlmex10%
   }{}%
\theoremstyle{plain}
\newtheorem{theorem}{Theorem}
\newtheorem{proposition}{Proposition}
\newtheorem{lemma}{Lemma}
\newtheorem{corollary}{Corollary}
\theoremstyle{definition}
\newtheorem{definition}{Definition}
\newtheorem{remark}{Remark}
\newcommand\cX{\mathcal{X}}
\newcommand\cA{\mathcal{A}}
\newcommand\NN{\mathbb{N}}
\newcommand\ZZ{\mathbb{Z}}
\newcommand\RR{\mathbb{R}}
\newcommand\sD{\mathsf{D}}
\newcommand\sA{\mathsf{A}}
\newcommand\ld{\mathrm{ld}}
\begin{document}

\title[Measures for Irwin series]{Measures for the summation of Irwin series}

\author[J.-F. Burnol]{Jean-François Burnol}

\address{Université de Lille, Faculté des Sciences et technologies,
  Département de mathématiques, Cité Scientifique, F-59655 Villeneuve d'Ascq
  cedex, France}
\email{jean-francois.burnol@univ-lille.fr}
\date{December 21, 2025. To appear in Integers (2026). This ultimate
  pre-publication version updates the numerotation of lemmas, propositions,
  and theorems to match the one of the upcoming publication.}
\subjclass[2020]{Primary 11Y60, 11B75; Secondary 11A63, 65B10, 44A60, 40-04;}
\keywords{Kempner and Irwin series}

\begin{abstract}
We consider the series of reciprocals of those positive integers with exactly
$k$ occurrences of a given $b$-ary digit $d$ (Irwin series), and obtain
geometrically convergent representations for their sums.  They are expressed
in terms of the moments and Stieltjes transforms of some measures on the unit
interval.  The moments obey linear recurrences allowing straightforward
numerical implementations.  This framework also allows to study the large $k$
limit, and, in further works by the author, the large $b$ asymptotics.
\end{abstract}

\maketitle
\section{Introduction}

The Irwin series \cite{irwin} are sub-series of the harmonic series with
conditions on the number of occurrences of digits in the denominators.  For
example, only those positive integers having at most 77 occurrences of the
digit $3$, and/or at most 125 occurrences of the digit $5$, are
kept.  Irwin showed that such series converge.
This generalized an earlier contribution by Kempner
\cite{kempner} where the convergence was shown for those series whose terms
have no occurrence of a given digit.

Hardy and Wright establish the Kempner result as
Theorem 144 of \cite{hardywright}.  Their statement ``The sum of the
  reciprocals of the numbers which miss a given digit is convergent'' is proven
for an arbitrary radix $r$, whereas Kempner had considered only decimal
figures.  It is found in the section ``The representations of numbers as
decimals'' which discusses topics such as sets of measure zero and the
theorem of Borel stating that almost every real number is normal in any base
\cite[Thm. 148]{hardywright}.

The Kempner and Irwin series have been the object of various studies and
generalizations \cite{allouchecohenetalActa1987, alloucheshallit1989,
  alloucheshallitCUP2003, alloucheshallitsondow2007, allouchemorin2023,
  allouchehumorin2024, aloui2017, baillie1979, baillie2008, farhi, fischer,
  gordon2019, huyining2016, irwin, kempner, klove1971, kohlerspilker2009,
  lubeckponomarenko2018, mukherjeesarkar2021, nathanson2021integers,
  nathanson2022ramanujan, schmelzerbaillie, segalleppfine1970, wadhwa1978,
  walker2020}.  A characteristic of Theorems \ref{thm:main} and
\ref{thm:mainpositive} established in the present work is that they provide theoretically
\emph{exact} formulas which can be converted straightforwardly
into efficient
numerical algorithms%
\footnote{See
  \url{https://burnolmath.gitlab.io/irwin/\#sagemath-implementation}.}.
They are
thus an alternative to the (also efficient!) numerical algorithms of
Baillie \cite{baillie1979,baillie2008}.  In all examples computed by the
author, the results matched those returned by the Baillie algorithms for
Kempner and Irwin series.  In a later work \cite{burnolone42}, we started the
extension to counting occurrences of multi-digit strings $d_1\dots d_p$. Here
too, the numerical results in the case of no occurrence matched what is
provided by the Schmelzer--Baillie \cite{schmelzerbaillie} algorithm. For $p=2$
and one occurrence, the approach of \cite{burnolone42} is currently the
only published algorithm available.

We shall consider here, for any triple of integers $(b,d,k)$ with $b>1$,
$0\leq d<b$, and $k\geq0$, the associated Irwin series, i.e., the subsum of
the harmonic series where only those terms are kept whose denominators have
exactly $k$ occurrences of the $b$-ary digit $d$.  The case $b=2$, $d=1$,
$k=0$ is special: it gives an empty series, and the value of the sum is thus
zero.
We explain how to generalize our earlier work \cite{burnolkempner} which
handled $k=0$.  Here as in \cite{burnolkempner}, the core objects are moments
and Stieltjes transforms of suitable measures on the unit interval.

For $b=10$, Farhi \cite{farhi} proved that the limit as $k\to\infty$ is
$10\log(10)$.  We prove that, in general, it equals $b\log(b)$.  Farhi's
method could be extended to general $b>1$, but our approach is very different.
We actually give two proofs, one a corollary of the convergence of
certain measures toward the Lebesgue measure.  We extend this latter method of
proof in \cite{burnolblocks} to the case of counting occurrences of a
multi-digit string $d_1\dots d_p$, recovering the recent theorem by Allouche,
Hu, and Morin \cite{allouchehumorin2024}, which says that the limit as
$k\to\infty$ in that case is $b^p\log(b)$.

In further works, which were also based upon the present one, we obtained
various new results on Kempner--Irwin series and their generalizations, in
particular regarding their large $b$ asymptotics \cite{burnoldigamma,
  burnollargeb,burnolasymptotic}.

\section{Notation and Terminology}

Let $b>1$ be an integer, which is kept fixed throughout the paper.  The set
$\NN$ is defined as $\ZZ_{\geq0}$ (i.e., it contains zero).  We define
\emph{$b$-imal} numbers as the elements of $\cup_{l\geq0} b^{-l}\ZZ$. For
such a $b$-imal number $x$, the
smallest $l\geq0$ with $x\in b^{-l}\ZZ$ is called its \emph{depth}.

Let $\sD = \{0,\dots,b-1\}$ be the set of $b$-ary digits.  We pick a
$d\in\sD$ which will be fixed throughout the paper.  The case $d=0$ has some
specific features and will require special consideration at some places.  We let
$\sA = \sD\setminus\{d\}$.  We let $N = \# \sA = b-1$ and $N_1 =
\#(\sA\setminus\{0\})$.

The space of \emph{strings} $\boldsymbol{\sD}$ is defined to be the union of all
Cartesian products $\sD^l$ for $l\geq0$.  Notice that for $l=0$ one has
$\sD^0=\{\emptyset\}$, i.e., the set with one element.  We will call this
special element the \emph{none-string}.  The length $|X|=l$ of a string $X$ is
the integer $l$ such that $X\in\sD^l$.  For $l=1$ a string is thus the same
thing as a
digit, although for the sake of clarity it is better to treat
them as separate things.  There is a map from strings to integers which to
$X=(d_{l},\dots ,d_1)$ assigns $n(X) = d_{l} b^{l} + \dots + d_1$, and
sends the none-string to zero.  This map is many-to-one, and each
integer $n$ has a unique minimal-length representation $X(n)$ which is called
the (minimal) \emph{$b$-representation} of $n$.

The length of an integer $n$ is denoted $l(n) = |X(n)|$. Thus, $l(n)$ is the
smallest non-negative integer such that $n<b^{l(n)}$.  Note that $l(0)=0$, not
$1$.  The letter $l$ is also used to refer to a specific integer, e.g., we
may say ``let $l$ be an integer, and consider the integers $m$ such
  that $l(m)=l$''.  We hope that this notation will not create confusion.

Define, for $k\geq 0$:
\[
\cX^{(k)} = \{ X \in \boldsymbol{\sD}\mid X \textrm{ contains
  the digit }d\textrm{ exactly }k\textrm{ times}\}.
\]
The set $\cX^{(k)}$ contains the none-string if and only if $k=0$.  We write
$\cX^{(k)}_l$ for its subsets of strings having length $l$, for $l\geq0$. Let
$\cA^{(k)}$ be the set of non-negative integers with minimal $b$-imal
representation in $\cX^{(k)}$, and $\cA^{(k)}_l$ the subset of those having
length $l$.  We obtain a partition $\NN=\ZZ_{\geq0}=\cA^{(0)}\cup \cA^{(1)} \cup
\cA^{(2)} \cup\dots$.  We write $k_{b,d}(n)$ (or often simply $k(n)$) for the
number of occurrences of the digit $d$ in the minimal $b$-representation of the
integer $n$.  With this notation, the Irwin sums considered in this paper
are:
\[
H_{b,d}^{(k)} = \sum_{n>0,\; k_{b,d}(n)=k} \frac1n\;.
\]
As we are handling only positive terms, the order of summation is irrelevant.
The cardinality of the set of strings of $l$ $b$-ary digits, $k$ among them equal to
$d$, is $\binom{l}{k}(b-1)^{l-k}$.
Terms in $H_{b,d}^{(k)}$ with denominators of the same
length $l$ constitute ``blocks'' $S_{b,d;l}^{(k)}$:
\[
S_{b,d;l}^{(k)} = \sum_{l(n)=l,\; k_{b,d}(n)=k} \frac1n\;.
\]
If
$k>l$, no integer with $l$ digits can have $k$ occurrences of $d$, and
$S_{b,d;l}^{(k)} = 0$.  Note also that $H_{2,1}^{(0)} = 0$ because
the series is empty.
Taking into account that each $n^{-1}$ with $b^{l-1}\leq n<b^l$ contributes at
most $b^{-(l-1)}$, we see that $S_{b,d;l}^{(k)}=O_{l\to\infty}(l^k(1
-1/b)^l)$.  So, $H_{b,d}^{(k)}<\infty$.

As the base $b$ and the digit $d$ are fixed throughout the paper, we shall
usually drop them from the subscripts and abbreviate $H_{b,d}^{(k)}$ and
$S_{b,d;l}^{(k)}$ to, respectively, $H^{(k)}$ and $S_{l}^{(k)}$.
The notation $S_l^{(\leq k)}$ means $\sum_{0\leq j\leq k}S_l^{(j)}$, and
$S_l^{(< k)}=\sum_{0\leq j< k}S_l^{(j)}$.

Regarding integration against measures $\mu$, we will usually write $d\mu(x)$,
and sometimes $\mu(dx)$ with the same meaning.  The letter $d$ here is not a
$b$-ary digit...

\section{The New Series Representing Irwin Sums}

Here, we gather together the main results.
\begin{theorem}\label{thm:main}
  Let $b\in\NN$, $b>1$, and $d\in\{0,\dots,b-1\}$.
  Define, for each $j\geq1$,
  $\gamma_j= \sum_{\substack{a\neq d\\ 0\leq a <b}} a^j$.
  Let $u_{0;0}=b$ and let $(u_{0;m})_{m\geq0}$ satisfy the recurrence
  \begin{equation*}
   m\geq1\implies(b^{m+1} - b +1) u_{0;m} = \sum_{j=1}^{m} \binom{m}{j} \gamma_j u_{0;m-j}.
  \end{equation*}
  Let $u_{j;m}$ ($j\geq1$, $m\geq0$) satisfy the recurrence
  \begin{equation*}
    m\geq0\implies(b^{m+1} - b +1) u_{j;m}=\sum_{j=1}^{m}\binom{m}{j}\gamma_j u_{j;m-j}
    + \sum_{j=0}^{m}\binom{m}{j}d^j u_{j-1;m-j}.
  \end{equation*}
  In particular $u_{j;0} = u_{j-1;0} = \dots = u_{0;0} = b$.

  Let $k\geq 0$ and $l\geq1$.
  The Irwin sum $H^{(k)}$ can be expressed using integers of length at
  most $l$ and having at most $k$ occurrences of the digit $d$ (as indicated below by
  the superscript ${}^{(\leq k)}$):
  \begin{equation}\label{eq:main1}
    H^{(k)} = \sideset{}{^{(k)}}\sum_{0<n<b^{l-1}} \frac1n +
       b\cdot \sideset{}{^{(\leq k)}}\sum_{b^{l-1}\leq n<b^l} \frac1n  +
      \sum_{m=1}^\infty (-1)^{m}
       \;\sideset{}{^{(\leq k)}}\sum_{b^{l-1}\leq n<b^l}\; \frac{u_{k-k(n);m}}{n^{m+1}}\;.
  \end{equation}

  The quantities $u_{j;m}$ ($j\geq0$, $m\geq1$) have the following properties:
  \begin{itemize}
  \item They are non-negative and vanish only if $j=0$ and $b=2$ and $d=1$.
  \item They decrease (strictly if not zero) for increasing $m$.
  \item They increase strictly for increasing $j$ and converge to $b/(m+1)$.
  \end{itemize}
\end{theorem}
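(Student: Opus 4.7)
The plan is to derive \eqref{eq:main1} via a geometric-series expansion of tail contributions, verify that the resulting coefficients satisfy the stated recurrence through a self-similar decomposition of strings, and establish the analytic properties by packaging the $u_{j;m}$ as moments of an explicit measure on $[0,1]$. For the formula, I would split $H^{(k)}$ into $\sum_{0<n<b^{l-1},\,k(n)=k}n^{-1}$ and the tail $T_l^{(k)}=\sum_{n'\geq b^{l-1},\,k(n')=k}1/n'$; each such $n'$ factors uniquely as $n'=nb^s+n(X)$ with $n$ of length exactly $l$ and $X\in\cX^{(k-k(n))}_s$ for some $s\geq 0$, which forces $k(n)\leq k$. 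Since $n(X)<b^s$, the expansion
\[
\frac{1}{nb^s+n(X)} \;=\; \sum_{m\geq0}(-1)^m\,\frac{n(X)^m}{(nb^s)^{m+1}}
\]
is valid, and summing absolute values over $(s,X,m)$ yields $\sum_{s,X}1/n'$, a subsum of $H^{(k)}$, so the triple interchange is legitimate. Collecting terms by $j:=k-k(n)$ and the power of $n$, and using $u_{j;0}=b$ (an elementary geometric series) to peel off the $m=0$ piece, produces \eqref{eq:main1} with
\[
u_{j;m} \;=\; \sum_{s\geq0} b^{-s(m+1)}\sum_{X\in\cX^{(j)}_s} n(X)^m.
\]

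\textbf{Recurrence and measure.} For $s\geq1$ I would split each $X\in\cX^{(j)}_s$ according to its leading digit $a\in\sD$, writing $X=(a,X')$ with $X'\in\cX^{(j-[a=d])}_{s-1}$ and $n(X)=ab^{s-1}+n(X')$. Expanding $n(X)^m$ by the binomial theorem, summing over $a$ (with $a=d$ isolated from $a\in\sA$), and reindexing $s\to s+1$, produces $b^{-(m+1)}$ times two binomial sums in the $u_{j;\cdot}$ and $u_{j-1;\cdot}$ respectively. Pulling the $i=0$ term of the $\sA$-sum to the left, whose coefficient $\#\sA=b-1$ combines with $b^{m+1}$ into $b^{m+1}-(b-1)=b^{m+1}-b+1$, delivers the stated recurrence (with $u_{-1;\cdot}\equiv 0$ in the $j=0$ case). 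The same combinatorics is repackaged as the positive measure
\[
\nu_j \;:=\; \sum_{s\geq0} b^{-s}\sum_{X\in\cX^{(j)}_s}\delta_{n(X)/b^s}
\]
of total mass $b$ on $[0,1]$, satisfying $u_{j;m}=\int_0^1 x^m\,d\nu_j(x)$ and the IFS identity $\nu_j=[j=0]\,\delta_0+b^{-1}\sum_{a\in\sA}(\phi_a)_*\nu_j+b^{-1}(\phi_d)_*\nu_{j-1}$ with $\phi_a(x)=(a+x)/b$.

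\textbf{Properties and main obstacle.} Non-negativity is immediate from the measure representation; vanishing for $m\geq 1$ would force $\nu_j$ supported in $\{0\}$, possible only when every $X\in\cup_s\cX^{(j)}_s$ is all-zero, i.e.\ $j=0$ and $\sA=\{0\}$, i.e.\ $(b,d)=(2,1)$. Strict decrease in $m$ follows from $u_{j;m}-u_{j;m+1}=\int_0^1 x^m(1-x)\,d\nu_j>0$, since $n(X)/b^s<1$ leaves no mass at $1$ and positive mass persists on $(0,1)$ outside the degenerate case. For strict increase in $j$ I would subtract the recurrences,
\[
(b^{m+1}-b+1)(u_{j;m}-u_{j-1;m})=\sum_{i=1}^m\binom{m}{i}\gamma_i(u_{j;m-i}-u_{j-1;m-i})+\sum_{i=0}^m\binom{m}{i}d^i(u_{j-1;m-i}-u_{j-2;m-i}),
\]
and run a double induction, outer on $j$ and inner on $m$: the first sum is $\geq 0$ by the inner hypothesis, and the $i=0$ term of the second is strictly positive for $m\geq 1$ by the outer hypothesis; the base $j=1$ amounts to checking that $u_{0;m}+d^m u_{0;0}>0$, at least one summand being positive for every $(b,d)$. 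Finally, $u_{j;m}\uparrow u_{\infty;m}$ by monotone boundedness, and passing to the limit in the IFS identity shows that $\nu_\infty$ is invariant under $\nu\mapsto b^{-1}\sum_{a\in\sD}(\phi_a)_*\nu$, whose unique mass-$b$ fixed point is $b$ times Lebesgue measure on $[0,1]$, giving $u_{\infty;m}=b/(m+1)$. The main obstacle is orchestrating the double induction so that the degenerate triple $(b,d,j)=(2,1,0)$ does not obstruct the base cases; a cleaner alternative that avoids the weak-convergence step is to verify directly, via the Faulhaber-type identity $\sum_{i=0}^m\binom{m+1}{i}\Gamma_i=b^{m+1}$ for $\Gamma_i=\sum_{a=0}^{b-1}a^i$, that $b/(m+1)$ solves the limit recurrence $(b^{m+1}-b)u_m=\sum_{i=1}^m\binom{m}{i}\Gamma_i u_{m-i}$.
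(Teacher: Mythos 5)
Your route is essentially the paper's: the quantity $u_{j;m}=\sum_{s\geq0}b^{-s(m+1)}\sum_{X\in\cX^{(j)}_s}n(X)^m$ is exactly the $m$-th moment of the paper's measure $\mu_j$ (your $\nu_j$), the leading-digit decomposition giving the recurrence is the paper's Lemma \ref{lem:int} and Proposition \ref{prop:recurrence}, the double induction for strict monotonicity in $j$ is Proposition \ref{prop:mainestimate}, and both of your arguments for the limit $b/(m+1)$ (the Faulhaber-type identity and the invariance of the limit measure under the $b$-adic IFS) appear in the paper as Propositions \ref{prop:mainestimate} and \ref{prop:convergence}. So there is nothing genuinely different to compare; the question is only whether your justifications hold.

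There is one genuine gap, in the interchange of summations that produces \eqref{eq:main1}. You claim that summing the absolute values of the expanded terms over $(s,X,m)$ gives $\sum_{s,X}1/n'$. It does not: the geometric series of absolute values sums to
\[
\sum_{m\geq0}\frac{n(X)^m}{(nb^s)^{m+1}}=\frac{1}{nb^s-n(X)},
\]
not $\frac{1}{nb^s+n(X)}$. For $n\geq2$ this is harmless, since $nb^s-n(X)>(n-1)b^s\geq\frac13(nb^s+n(X))$ and absolute convergence follows by comparison with the convergent Irwin subsum; so your argument is repairable for every $l\geq2$. But the theorem is stated for all $l\geq1$, and for $l=1$ the integer $n=1$ occurs (whenever $k(1)\leq k$). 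There, whenever $d\neq b-1$, the sum $\sum_{s}\sum_{X\in\cX^{(j)}_s}\frac{1}{b^s-n(X)}$ diverges: for instance the string $(b-1)^{s-j}d^{\,j}$ lies in $\cX^{(j)}_s$ and satisfies $b^s-n(X)=b^j-d\frac{b^j-1}{b-1}$, a constant independent of $s$, so infinitely many terms are bounded below. Indeed the resulting series $\sum_m(-1)^mu_{j;m}$ is only conditionally convergent, so no absolute-convergence argument can legitimize the interchange for $n=1$. The paper closes this case separately (Corollary \ref{cor:alt}): truncate $\frac{1}{1+x}=\sum_{m=0}^{M}(-1)^mx^m+(-1)^{M+1}\frac{x^{M+1}}{1+x}$, integrate against $\nu_j$, and bound the remainder in absolute value by $u_{j;M+1}\to0$. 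You need to add this (or an equivalent Abel-summation step) to cover $l=1$; everything else in your outline is sound.
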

\begin{proof}
  This uses most everything from the present paper.  In brief: the validity of the
  series is established in Proposition \ref{prop:series}, which is a corollary
  to the integral formulas from Proposition \ref{prop:integralformula},
  themselves being variants of the $H^{(k)} =
  \int_{[b^{-1},1)}\frac{d\mu_k(x)}x$ {log-like} expression of
  Proposition \ref{prop:loglike}.  The coefficients $u_{j;m}$ are the moments
  of measures $\mu_j$ which are the main topic of this paper. The recurrence relations
  are established in Proposition \ref{prop:recurrence}.  The bounds on the
  $u_{j;m}$,  and their limits as $j\to\infty$, are obtained in
  Proposition \ref{prop:mainestimate}.  The
  $b/(m+1)$ limit for $j\to\infty$ is also a corollary of Proposition
  \ref{prop:convergence} which establishes the convergence of the measures
  $\mu_k$ to $b\,dx$.
\end{proof}
\begin{remark}
  We examine when the alternating series in Equation \eqref{eq:main1} has
  only vanishing contributions.  The involved coefficients $u_{j;m}$ must
  vanish, which can happen only for $(b,d,j)=(2,1,0)$.  Assume thus $b=2$ and
  $d=1$.  The contributing $n$ with $l$ digits are constrained by $k(n)\leq
  k$.  If $k=0$, there is no such $n$, and the alternating series is empty.
  If $k=1$, only $k(n)=1$ is realized (as a positive integer must have its
  leading digit equal to $1$), and only by $n=2^{l-1}$.  But then $j=k-k(n)=0$
  and thus $u_{j;m}=0$ and the contribution vanishes.  If $k>1$, the
  contribution of $n=2^{l-1}$ is $(-1)^mu_{k-1;m}/2^{(l-1)(m+1)}$ and is
  non-zero.  The alternating series is thus either identically vanishing
  ($b=2$, $d=1$, $k\leq1$) or its terms decrease strictly in absolute value.

  For $l=1$ the series is built with inverse powers of the digits.  If
  $k\geq1$ all digits contribute.  The first contributions to Equation \eqref{eq:main1}
  (left of the alternating series) add  up to $b(\frac11+\frac12+\dots +
  \frac1{b-1})$, which is thus an upper bound.  Also with $l=1$, if $k=0$ we
  have the same upper bound if $d=0$, and if $d>0$ we omit $b/d$
  as the integer $n=d$ gives no contribution.
  This upper bound $b\sum_{1\leq n<b} \frac1n -
  b\mathbf{1}_{k=0,d>0}(k,d)\frac1d$ is strict, except with $b=2$, $d=1$ and
  $k\leq 1$ all true in which case the upper bound is the exact value of
  $H^{(k)}$.
  In all other cases, keeping from the alternating series in Equation \eqref{eq:main1}
  only the first term, we obtain a strict lower bound of $H^{(k)}$.  See also
  Propositions \ref{prop:farhi} and \ref{prop:lowerbound}.
\end{remark}
\begin{remark}
  One can use the theorem with $l=1$, but if $k\geq1$, the one-digit number
  $n=1$ will then always contribute, and as all coefficients $u_{j;m}$ are
  bounded below if $d\neq b-1$ by $1/(m+1)$ (see Proposition
  \ref{prop:mainestimate}), we obtain for level $1$, $k\geq1$, $d\neq
  b-1$, a series as poorly converging as the one for $\log2 = 1 - \frac12 +
  \frac13-\frac14+\dots$.

  It is preferable for numerical implementations to use at least $l=2$.
  The
  finite sum of the reciprocal powers $1/n^{m+2}$ is bounded above by
  $b^l/(b^{l-1})^{m+2}$.  Combined with the $0\leq u_{l;m}\leq b/(m+1)$ bounds this gives
  an upper bound $b^{-m(l-1)+3-l}/(m+2)$.  Hence, each additional term of the series will
  give about $l-1$ new places of precision, in radix $b$ representation,
  for the approximation of the Irwin sum.
  Using $l=3$ has the advantage of dividing by two the needed
  range
  of $m$ for the same target precision.  It does induce additional cost
  in computing the inverse power sums, as they have more contributions.
  In our initial
  \href{https://www.sagemath.org}{\textsc{SageMath}} implementation from 2024%
  \footnote{Available at
    \url{https://arxiv.org/src/2402.09083v1/anc}. A more
    sophisticated version, applying parallelism to
    some extent, is now available at \url{https://gitlab.com/burnolmath/irwin}.
  }
  we
  observed for $b=10$ and $d=9$ that $l=3$ was beneficial at about $1200+$
  decimal digits for $H^{(0)}$ and already at $600+$ digits for $H^{(1)}$ and
  $400+$ digits for $H^{(2)}$ compared to using $l=2$.  But this depends on
  the actual implementation and on the numerical libraries used.
%
%
For small bases, the benefit of choosing $l=3$ --- and even $l=4$ in the cases $b=2$
and $b=3$ --- becomes evident already at substantially lower target precision.
\end{remark}

Theorem \ref{thm:mainpositive} below writes $H^{(k)}$ as a series of non-negative
terms that obey a linear recurrence relation and decay geometrically to
zero. The terms are positive except for $(b,d,k)=(2,1,0)$.
\begin{theorem}\label{thm:mainpositive}
  Let $b>0$, $d\in\{0,\dots, b-1\}$.
  For each $j\geq1$, let $\gamma_j'$ be $\sum_{\substack{a\neq b-1-d\\ 0\leq a<b}} a^j$.
  Let $(v_{0;m})_{m\geq0}$ satisfy the recurrence
  \begin{equation*}
    m\geq0\implies(b^{m+1} - b +1) v_{0;m} =
    b^{m+1} + \sum_{j=1}^{m} \binom{m}{j} \gamma_j' v_{0;m-j}.
  \end{equation*}
  In particular, $v_{0;0}=b$. Let $v_{j;m}$ ($j\geq1$, $m\geq0$) satisfy the recurrence
  \begin{equation*}
    m\geq0\implies(b^{m+1} - b +1) v_{j;m}=\sum_{j=1}^{m}\binom{m}{j}\gamma_j' v_{j;m-j}
    + \sum_{j=0}^{m}\binom{m}{j}(b-1-d)^j v_{j-1;m-j}.
  \end{equation*}
  In particular, $v_{j;0}=v_{j-1;0}=\dots = v_{0;0}=b$ for all $j\geq 0$.

  Let $l\geq1$ and $k\geq0$.  One has
  \begin{equation*}
    H^{(k)}= \sideset{}{^{(k)}}\sum_{0<n<b^{l-1}} \frac1n +
    b\cdot \sideset{}{^{(\leq k)}}\sum_{b^{l-1}\leq n<b^l} \frac1{n+1}  +
    \sum_{m=1}^\infty\;\quad
    \sideset{}{^{(\leq k)}}\sum_{b^{l-1}\leq n<b^l}\;\frac{v_{k-k(n);m}}{(n+1)^{m+1}}\;.
  \end{equation*}
  The superscript ${}^{(\leq k)}$ means to restrict to integers $n$ having in base $b$
  at most $k$ occurrences of the digit $d$.

  The quantities $v_{j;m}$ ($j\geq0$, $m\geq1$) have the following properties:
  \begin{itemize}
  \item They are positive and bounded above by $b$.
  \item They decrease strictly for increasing $m$, except for $b=2$, $d=1$, and
    $j=0$, in which case $v_{0;m}=2$ for all $m$.
  \item They decrease strictly for increasing $j$ and converge to $b/(m+1)$.
  \end{itemize}
\end{theorem}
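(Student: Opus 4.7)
The plan is to derive Theorem~\ref{thm:mainpositive} from Theorem~\ref{thm:main} by re-expanding the ``log-like'' Stieltjes transform representation around $n+1$ instead of $n$. By Proposition~\ref{prop:loglike} and the moment analysis behind Proposition~\ref{prop:mainestimate}, the $u_{j;m}$ are the moments $\int_0^1 x^m\,d\mu_j(x)$ of a positive measure $\mu_j$ of total mass $u_{j;0}=b$ on $[0,1]$, so that
\[
\sum_{m\geq 0}(-1)^m\frac{u_{j;m}}{n^{m+1}} \;=\; \int_0^1\frac{d\mu_j(x)}{n+x}.
\]
Rewriting $n+x = (n+1)-(1-x)$ and expanding the geometric series in $(1-x)/(n+1)\in[0,1)$, term-by-term integration gives
\[
\int_0^1\frac{d\mu_j(x)}{n+x} \;=\; \sum_{m\geq 0}\frac{v_{j;m}}{(n+1)^{m+1}},\qquad v_{j;m}\;:=\;\int_0^1(1-x)^m\,d\mu_j(x),
\]
in which every coefficient is automatically non-negative and bounded above by $b$. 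Substituting this into Equation~\eqref{eq:main1} (noting $v_{j;0}=u_{j;0}=b$, so after absorbing the $m=0$ contribution the block term $b/n$ becomes $b/(n+1)$) yields the series displayed in Theorem~\ref{thm:mainpositive}.

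The $v_{j;m}$ are moments of the pushforward $\tilde\mu_j := (1-x)_*\mu_j$, and the recurrence can be derived cleanly at the measure-theoretic level. The reflection $x\mapsto 1-x$ conjugates the digit reflection $a\mapsto b-1-a$ of $\sD$, which permutes $\sA$ and exchanges $d$ with $b-1-d$. Consequently the functional equation for $\mu_j$ underlying Proposition~\ref{prop:recurrence} pulls back to a relation for $\tilde\mu_j$ of the same shape, but with $d$ replaced throughout by $b-1-d$; this accounts for $\gamma_j'$ in place of $\gamma_j$ and for $(b-1-d)^j$ in place of $d^j$. The extra inhomogeneous $b^{m+1}$ in the $v_{0;m}$-recurrence, absent for $u_{0;m}$, appears because the latter was stated only for $m\geq 1$ with $u_{0;0}=b$ as a separate initial condition; rewriting the equation in a form uniformly valid for all $m\geq 0$ and then applying the binomial transform $v_{0;m}=\sum_{i=0}^m\binom{m}{i}(-1)^i u_{0;i}$ produces the source term $b^{m+1}$. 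I would confirm this by a direct algebraic verification, which is routine but tedious.

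Finally, the three bulleted properties follow immediately from the measure description. Positivity and the uniform bound $v_{j;m}\leq b$ come from $0\leq(1-x)^m\leq 1$ together with $\tilde\mu_j([0,1])=b$. Strict decrease in $m$ is equivalent to $\tilde\mu_j$ not being a point mass at $1$; this fails precisely when $\mu_j$ is a point mass at $0$, i.e.\ in the exceptional case $b=2$, $d=1$, $j=0$ (where $\mu_0=2\delta_0$, $\tilde\mu_0=2\delta_1$, and $v_{0;m}=2$ for all $m$). Strict decrease in $j$ and the limit $v_{j;m}\to b/(m+1)=b\int_0^1(1-x)^m\,dx$ are inherited from the corresponding properties of $u_{j;m}$ in Proposition~\ref{prop:mainestimate} and the weak convergence $\mu_k\Rightarrow b\,dx$ of Proposition~\ref{prop:convergence} (Lebesgue measure being invariant under $x\mapsto 1-x$). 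The main obstacle, if any, is pinning down the $b^{m+1}$ source term in the $v_{0;m}$-recurrence; all other content is a rephrasing of what has already been developed for Theorem~\ref{thm:main}.
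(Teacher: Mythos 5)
Your route to the series itself is exactly the paper's: the expansion $\frac{1}{n+x}=\sum_{m\ge0}\frac{(1-x)^m}{(n+1)^{m+1}}$ integrated term by term against $\mu_{k-k(n)}$ is Corollary \ref{cor:pos}, and summing over the integers $n$ of length $l$ with $k(n)\le k$ is Proposition \ref{prop:series}. Your reflection argument for the recurrences also matches the paper in substance: the paper conjugates the functional equation of Lemma \ref{lem:int} by $F_k(t)=e^tE_k(-t)$, i.e.\ works with the exponential generating function of the moments of the pushforward of $\mu_k$ under $x\mapsto 1-x$, and the digit reflection $a\mapsto b-1-a$ appears exactly as you describe; the image $e^{bt}$ of the constant term $f(0)$ in Equation \eqref{eq:intmu0} is what produces the $b^{m+1}$ source in Equation \eqref{eq:recurrv0}. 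You leave that last computation unverified, but the mechanism you name is the right one and the verification is routine.

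The genuine gap is in the third bullet. You assert that the strict decrease of $(v_{j;m})_{j\ge0}$ is ``inherited from the corresponding properties of $u_{j;m}$ in Proposition \ref{prop:mainestimate}.'' It is not: that proposition gives that each $u_{j;i}$ ($i\ge1$) strictly \emph{increases} in $j$, while $v_{j;m}=\sum_{i=0}^m\binom{m}{i}(-1)^iu_{j;i}$ is an alternating combination of these, so the signs of the increments do not combine to give a sign for $v_{j+1;m}-v_{j;m}$ once $m\ge2$ (already for $m=2$ you would need $u_{j+1;2}-u_{j;2}<2\,(u_{j+1;1}-u_{j;1})$, which does not follow from monotonicity alone because $\mu_{j+1}-\mu_j$ is a signed measure). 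The paper is explicit that this point is delicate: the remark after Proposition \ref{prop:halfopen} notes that the interval masses $\mu_k([t,u))$ are \emph{not} all monotone in $k$, which is precisely why $H^{(k)}$ can decrease while every $u_{k;m}$ increases. The paper's actual proof of the decrease is Proposition \ref{prop:decreasev}, a separate double induction on $k$ and $m$ run on the recurrences \eqref{eq:recurrv} and \eqref{eq:recurrv0}, starting from the strict inequality $\sum_{j}\binom{m}{j}(d')^jv_{k-1;m-j}<b^{m+1}$ for $m\ge1$. You need that argument (or an equivalent one); nothing already proved for the $u_{j;m}$ supplies it. The limit $b/(m+1)$ does follow from the weak convergence $\mu_k\to b\,dx$ exactly as you say, and the first two bullets are fine.
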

\begin{proof}
  The series are established in Proposition \ref{prop:series}.  The
  recurrences are Equations \eqref{eq:recurrv} and \eqref{eq:recurrv0}.  The
  value $b/(m+1)$ of the limit of $(v_{j;m})_{j\geq0}$, and the monotonicity,
  are proven in Proposition \ref{prop:decreasev}.  Another proof
  follows from Proposition \ref{prop:convergence} about the convergence
  $\mu_j\to b\,dx$ and the definition of the $v_{j;m}$ as complementary
  power moments.
\end{proof}

The next statement was proven by Farhi \cite{farhi} for $b=10$.  We obtain it
here for all $b>1$.
\begin{proposition}\label{prop:farhi}
  With $d\neq0$ the sequence $(H^{(k)})$ is strictly decreasing for $k\geq1$.
  With $d=0$ it is strictly decreasing already starting at $k=0$.  In both
  cases the sequence converges to $b\log b$.
\end{proposition}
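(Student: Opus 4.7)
The plan is to derive both the limit and the strict monotonicity from Theorem~\ref{thm:mainpositive}, applied for a carefully chosen level $l$.

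For the limit $b\log b$, I would fix any $l\geq 1$ and let $k\to\infty$ in the formula of that theorem. Every integer $n<b^l$ satisfies $k(n)\leq l$, so as soon as $k\geq l$ the first sum $\sideset{}{^{(k)}}\sum_{0<n<b^{l-1}} 1/n$ is empty and the constraint ${}^{(\leq k)}$ on the other two sums is vacuous. The resulting $n$-range is finite and fixed, the coefficients obey $0\leq v_{j;m}\leq b$ uniformly, and $\sum_m b/(n+1)^{m+1}$ is summable since $n\geq b^{l-1}\geq 1$. Dominated convergence thus permits exchanging the limit with the series, and $v_{j;m}\to b/(m+1)$ (Theorem~\ref{thm:mainpositive}) yields
\begin{equation*}
\lim_{k\to\infty} H^{(k)} = b \sum_{b^{l-1}\leq n<b^l} \sum_{m=0}^\infty \frac{1}{(m+1)(n+1)^{m+1}} = b \sum_{b^{l-1}\leq n<b^l} \log\frac{n+1}{n} = b\log b,
\end{equation*}
the last step being telescoping of the logarithms.

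For the strict monotonicity, I would apply Theorem~\ref{thm:mainpositive} with $l=1$. The first sum is then empty and the formula reduces to
\begin{equation*}
H^{(k)} = b \sideset{}{^{(\leq k)}}\sum_{1\leq n<b} \frac{1}{n+1} + \sum_{m=1}^\infty\sideset{}{^{(\leq k)}}\sum_{1\leq n<b} \frac{v_{k-k(n);m}}{(n+1)^{m+1}}.
\end{equation*}
A positive single digit $n\in\{1,\dots,b-1\}$ has $k(n)=1$ if $n=d$ and $k(n)=0$ otherwise. Under the hypotheses of the proposition --- namely $k\geq 1$ for any $d$, or $k=0$ together with $d=0$ --- every such $n$ satisfies $k(n)\leq k$, so the constraint $^{(\leq k)}$ is vacuous at both levels $k$ and $k+1$. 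Subtracting, one finds
\begin{equation*}
H^{(k)}-H^{(k+1)} = \sum_{m=1}^\infty\sum_{1\leq n<b} \frac{v_{k-k(n);m}-v_{k+1-k(n);m}}{(n+1)^{m+1}},
\end{equation*}
each term of which is strictly positive by the strict decrease of $v_{j;m}$ in $j$ (Theorem~\ref{thm:mainpositive}).

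The main subtlety is in choosing the right level $l$. For $l\geq 2$, the sums $\sideset{}{^{(k)}}\sum_{0<n<b^{l-1}} 1/n$ at levels $k$ and $k+1$ are supported on disjoint sets of integers and contribute a difference whose sign is not obvious. At $l=1$ this sum is absent, and within the claimed range of $k$ and $d$ the $^{(\leq k)}$ constraints no longer distinguish the two levels, reducing the whole comparison to the pointwise monotonicity $v_{j;m}>v_{j+1;m}$, which is already in hand. The complementary boundary case (notably $k=0$ with $d\neq 0$), where $n=d$ crosses the $^{(\leq k)}$ threshold between the two levels and contributes a term of opposite sign, is precisely the case excluded from the statement.
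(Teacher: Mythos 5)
Your proposal is correct and follows essentially the same route as the paper: both apply Theorem~\ref{thm:mainpositive} at level $l=1$, derive the strict decrease from the strict monotonicity of $v_{j;m}$ in $j$ (noting that the constraint ${}^{(\leq k)}$ on single-digit $n$ is vacuous exactly in the claimed range of $k$ and $d$), and obtain the limit $b\log b$ by passing to $v_{j;m}\to b/(m+1)$ term by term and telescoping the resulting logarithms. Your treatment is somewhat more explicit about the dominated-convergence justification and the case analysis, but there is no substantive difference in method.
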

\begin{proof}
  We apply Theorem \ref{thm:mainpositive} with $l=1$.  Starting with $k=1$, there
  are no restrictions on the single-digit integers $n$ intervening in the
  inverse power sums. The theorem says in particular that the coefficients
  $v_{j;m}$ decrease when $j$ increases. This gives the decrease
  $(H^{(k)})_{k\geq1}$.  The theorem also gives their limits as $j\to\infty$. We
  recognize after taking these limits term by term the Taylor series of $-\log(1-h)$
  evaluated at $h=(n+1)^{-1}$, for $1\leq n <b$.  Adding up these logarithms
  gives a telescopic finite sum:
\[
  \lim H^{(k)} = b \sum_{1\leq n < b} -\log(1 - \frac1{n+1})  = b\log(b).
\]
  For $d=0$ all non-zero digits contribute to the series
  already for $k=0$. So the decrease starts already at $k=0$.
\end{proof}
\begin{proposition}\label{prop:lowerbound}
  For $b>2$ and $d\neq0$ one has $H^{(0)}> b\log(b) - b\log(1 +
  \frac{1}{d})$.  Hence $H^{(0)} > b\log(b/2)$.
\end{proposition}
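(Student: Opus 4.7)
The plan is to apply Theorem \ref{thm:mainpositive} at level $l=1$ with $k=0$, and to use the strict inequality $v_{0;m} > b/(m+1)$ supplied by the strict monotonicity clause of that theorem. With $l=1$ the first sum (over $0<n<1$) is empty. The restriction $k(n)\leq 0$ on single-digit integers $1\leq n < b$ simply forbids $n=d$; since $d\neq 0$ and $b>2$, the surviving index set $\{1,\dots,b-1\}\setminus\{d\}$ is non-empty. Theorem \ref{thm:mainpositive} therefore reads
\[
H^{(0)} = b\sum_{\substack{1\leq n<b\\ n\neq d}}\frac{1}{n+1}
  + \sum_{m=1}^\infty v_{0;m}\sum_{\substack{1\leq n<b\\ n\neq d}}\frac{1}{(n+1)^{m+1}}.
\]

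Next I would replace each $v_{0;m}$ by its strict lower bound $b/(m+1)$, and treat the $b\sum\frac{1}{n+1}$ term as the $m=0$ instance of $\frac{b}{m+1}\sum\frac{1}{(n+1)^{m+1}}$. Swapping the two summations and recognizing the resulting inner sum as the Taylor expansion of $-\log(1-(n+1)^{-1})$, already exploited in the proof of Proposition \ref{prop:farhi}, yields
\[
H^{(0)} > b\sum_{\substack{1\leq n<b\\ n\neq d}}\sum_{m=0}^\infty \frac{1}{(m+1)(n+1)^{m+1}}
       = b\sum_{\substack{1\leq n<b\\ n\neq d}} \log\!\Bigl(\frac{n+1}{n}\Bigr).
\]
Strictness holds because at least one $n$ survives and $v_{0;m}>b/(m+1)$ holds for every $m\geq1$ (strict decrease of $v_{j;m}$ in $j$ towards the limit $b/(m+1)$).

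The telescoping identity $\sum_{1\leq n<b}\log((n+1)/n)=\log b$, minus the contribution $\log((d+1)/d)=\log(1+1/d)$ of the omitted index $n=d$, delivers the first claimed bound $H^{(0)}>b\log(b)-b\log(1+1/d)$. The second assertion then follows at once: $d\geq 1$ gives $1+1/d\leq 2$, hence $b\log(1+1/d)\leq b\log 2$, and combining this with the strict inequality just obtained produces $H^{(0)}>b\log(b/2)$, whether $d=1$ (where the two bounds coincide but the first is strict) or $d>1$ (where already the first bound exceeds $b\log(b/2)$).

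There is no real obstacle in this argument: the only non-trivial input is the strict lower bound $v_{0;m}>b/(m+1)$, which is already part of Theorem \ref{thm:mainpositive}; everything else is the telescoping of logarithms that was set up earlier for Proposition \ref{prop:farhi}.
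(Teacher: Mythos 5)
Your proposal is correct and follows essentially the same route as the paper, which proves this proposition by the same argument as Proposition \ref{prop:farhi} with $l=1$, the only change being that the index $n=d$ is omitted from the single-digit sum. The one detail you make explicit that the paper leaves implicit --- that strictness comes from $v_{0;m}>b/(m+1)$ for $m\geq1$ together with the non-emptiness of $\{1,\dots,b-1\}\setminus\{d\}$ when $b>2$ --- is a correct and welcome clarification, not a deviation.
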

\begin{proof}
  Same proof as for the previous proposition using $l=1$.  The sole difference
  is that only integers $n$ in $\{1,\dots,b-1\}\setminus\{d\}$ contribute to
  the series.
\end{proof}
\begin{remark}
  Hence, except for the sole case $b=2$, $d=1$, $k=0$, we have
  $H^{(k)}>\min(3\log(3/2),2\log(2)) = 3\log(3/2)>1.2$. This means that in a
  floating point context, we can decide of how many terms to keep in the series,
  solely on the basis of fixed point estimates, i.e., absolute comparison to $1$.
\end{remark}

\section{Irwin Sums as Integrals}

Recall that, given an enumerable subset $\{x_1,x_2,\dots\}$
of the real line, and a series with non-negative terms $\sum_{n\geq1}
c_n$, possibly diverging, one can define the set-function
$\mu:\mathcal{P}(\RR)\to \RR_{\geq0}\cup\{+\infty\}$ which assigns to
any subset $G$ of the real line the quantity $\mu(G) =
\sum_{n=1}^\infty c_n\mathbf{1}_{G}(x_n)\in[0,\infty]$.  This set-function is
$\sigma$-additive and is a (non-negative, discrete, possibly
infinite) measure.  Integrability of a complex-valued function $f$ on the real
line means in this context $\sum_{n=1}^\infty c_n |f(x_n)|<\infty$.
Then $\int_{\RR} f(x)\,d\mu(x)
$ is defined as $\sum_{n=1}^\infty c_n f(x_n)$,
and is invariant under any permutation of the indexing of the set
$\{(x_n,c_n), n\geq1\}$.  We will make free use of the notation of measures and integrals
in the following.  We write $\mu = \sum_{i\in\mathcal{I}} c_i\delta_{x_i}$, where the
countable index set $\mathcal{I}$ does not have to be $\NN$.
In this paper we consider only measures supported in $[0,1)$ and having finite
total mass.  The
\emph{support} is defined to be the set $\{x_i, c_i>0\}$ (not its closure in
the usual topology).  A \emph{Dirac mass} is a
measure $c\delta_x$ having  a single real number $x$ in its support. The
\emph{weight} is $c=\mu(\{x\})>0$.

\begin{definition}
  The measure $\mu_{b,d}^{(k)}$, or for short in the sequel $\mu_k$, is
  the (infinite) sum of Dirac masses at the rational numbers $x =
  n(X)/b^{|X|}$, for $X\in \cX^{(k)}$, with respective weights $1/b^{|X|}$:
\begin{equation*}
    \mu_{k} = \sum_{X\in \cX^{(k)}} b^{-|X|}\delta_{n(X)/{b^{|X|}}}\;.
\end{equation*}
It is supported in $[0,1)$.
\end{definition}
\begin{remark}
  In our previous work \cite{burnolkempner} of which the present paper is a
  continuation in the single-digit case, we
  defined a measure on $\RR_{\geq0}$ but the proofs of the main Theorem used
  only its restrictions to $[0,1)$.  We thus here define our
  measures to only have support on $[0,1)$ and leave aside considerations
  relative to what the ``correct'' extension to $\RR_{\geq0}$ is.
\end{remark}
\begin{remark}\label{rem:zero}
  Suppose $d\neq0$.  Then the strings with $n(X)=0$, i.e., the none-string
  and those containing only $0$'s, belong to $\cX^{(k)}$ if and only if $k=0$.
  For $k=0$ there is
  a total weight of $1 + b^{-1} + b^{-2} + \dots = b/(b-1)$ assigned to
  the Dirac at the origin.  But for $k\geq1$ and $d\neq0$, there is no such Dirac mass.

  If $d=0$, $n(X) = 0$ for $X\in \cX^{(k)}$ happens if
  and only if $X$ is the string consisting of $k$ zeros.
  So, in this case, there is always a Dirac mass at the
  origin, which has weight $b^{-k}$.
\end{remark}
In the next calculation displaying the binomial series, one has  $N=b-1$.
\begin{align*}
  \mu_{k}([0,1)) &= \sum_{l\geq0} b^{-l}\#\{ X \in \cX^{(k)} \cap \sD^l\} =
  \sum_{l\geq k} b^{-l}\binom{l}{k}N^{l-k} \\
 &= \sum_{p=0}^\infty b^{-k-p}\binom{k+p}{k} N^p
= b^{-k}\frac1{(1 - N/b)^{k+1}} = \frac{b}{(b-N)^{k+1}} = b.
\end{align*}
Hence, the measure $\mu_k$ is
finite for any $k\geq0$ and its total mass is $b$.

We can express Irwin numbers as log-like quantities.
In the next Proposition, recall that $\mu_k$ is short for $\mu^{(k)}_{b,d}$, i.e., it also depends on $(b,d)$.
\begin{proposition}\label{prop:loglike}
For $b\geq2$, $d$ a $b$-ary digit, and $k\geq0$, there holds 
    \begin{equation*}
      \label{eq:loglike}
      H^{(k)}_{b,d} = \int_{[b^{-1},1)}\frac{d\mu_k(x)}x\;.
    \end{equation*}
\end{proposition}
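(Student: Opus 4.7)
The plan is to evaluate $\int_{[b^{-1},1)} d\mu_k(x)/x$ directly from the definition of $\mu_k$ as a sum of Dirac masses, identify which atoms fall inside $[b^{-1},1)$, and then recognize the resulting sum as $H^{(k)}$ by way of the bijection between positive integers and their minimal $b$-representations.

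First, I would unfold the definition to write
\[
\int_{[b^{-1},1)} \frac{d\mu_k(x)}{x} = \sum_{X\in\cX^{(k)}} \frac{b^{-|X|}}{n(X)/b^{|X|}}\cdot \mathbf{1}_{[b^{-1},1)}\bigl(n(X)/b^{|X|}\bigr) = \sum_{\substack{X\in\cX^{(k)}\\ b^{|X|-1}\le n(X)<b^{|X|}}} \frac{1}{n(X)},
\]
so that the weight $b^{-|X|}$ cancels against the $b^{|X|}$ from inverting the location. The upper bound $n(X)<b^{|X|}$ is automatic from the digit range, hence the only effective constraint is $n(X)\ge b^{|X|-1}$, which says the leading digit of $X$ is nonzero, equivalently that $X=X(n)$ is the minimal $b$-representation of the positive integer $n=n(X)$.

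Next I would invoke the bijection $n\leftrightarrow X(n)$ between positive integers $n$ with $l(n)=l$ and length-$l$ strings with nonzero leading digit. Under this bijection the condition $X\in\cX^{(k)}$ is the same as $k_{b,d}(n)=k$, so the remaining sum is precisely $\sum_{n>0,\, k_{b,d}(n)=k} 1/n = H^{(k)}$. Observe that the none-string, as well as any purely-zero string (which lies in $\cX^{(k)}$ only in the special cases pointed out in Remark \ref{rem:zero}), all correspond to $n(X)=0$ and hence to $x=0\notin[b^{-1},1)$; the integration range removes them exactly as $H^{(k)}$ excludes $n=0$.

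The only technical point to justify is that this rearrangement is legitimate, i.e., that the series converges absolutely. Since every term is non-negative this reduces to the finiteness of $H^{(k)}$, which is already established at the end of Section 2 via the estimate $S_l^{(k)}=O(l^k(1-1/b)^l)$. There is no further obstacle; the proposition is essentially a bookkeeping identity expressing the fact that restricting the atoms of $\mu_k$ to $[b^{-1},1)$ picks out exactly the minimal representations, and that the factor $b^{-|X|}/x$ converts the measure-theoretic weight into $1/n$.
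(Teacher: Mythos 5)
Your proposal is correct and follows essentially the same route as the paper's own proof: unfold the integral as a sum over atoms, observe that the restriction to $[b^{-1},1)$ selects exactly the strings with non-zero leading digit, and use the bijection between such strings and positive integers. The paper's version is just terser; your extra remarks on the none-string, the zero strings, and absolute convergence are sound but not points the paper felt needed elaboration.
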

\begin{proof}
  The integral is the sum of $b^{-|X|}/(n(X)/b^{|X|}) = n(X)^{-1}$ over all
  strings $X$ containing exactly $k$ times the digit $d$, and having a non-zero
  leading digit.  Such strings are in one-to-one correspondance with positive
  integers, so this is $H^{(k)}$.
\end{proof}

We need some additional notation for the next proposition: for $n>0$ of length
$q\geq l$, we let $\ld_l(n)$ be the integer $m$ of length $l$ which is ``at start'' of
$n$, i.e., $m$ is the floor of $n/b^{q-l}$.
\begin{proposition}\label{prop:integralformula}
Let $n>0$ be an integer of length $l$. Let $k\geq0$. One has
\[
  \int_{[0,1)} \frac{d\mu_k(x)}{n+x} =
  \sideset{}{^{(k(n)+k)}}\sum_{\ld_l(m)=n}\frac1m\;.
\]
\end{proposition}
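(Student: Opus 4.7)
The plan is to expand the integral directly from the definition of $\mu_k$ and recognize the resulting sum via a concatenation bijection between strings in $\cX^{(k)}$ and integers $m$ of length $\geq l$ whose leading $l$ digits spell out $n$.

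First I would substitute the definition of $\mu_k$ to rewrite
\[
\int_{[0,1)}\frac{d\mu_k(x)}{n+x}
 = \sum_{X\in\cX^{(k)}} \frac{b^{-|X|}}{n + n(X)/b^{|X|}}
 = \sum_{X\in\cX^{(k)}} \frac{1}{n\cdot b^{|X|}+n(X)}\;.
\]
Then I would introduce the map $\Phi\colon X\mapsto m(X):=n\cdot b^{|X|}+n(X)$ from strings (of any length) to positive integers, and verify that its restriction to $\cX^{(k)}$ is a bijection onto $\{m>0:\ \ld_l(m)=n,\ k(m)=k(n)+k\}$.

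The heart of the matter is the combinatorial identification: if $X$ has length $q'$, then because $0\leq n(X)<b^{q'}$ and $b^{l-1}\leq n<b^l$, the integer $m(X)$ satisfies $n\cdot b^{q'}\leq m(X)<(n+1)b^{q'}\leq b^{l+q'}$, so $m(X)$ has length exactly $l+q'$ and its minimal $b$-representation is the concatenation $X(n)\,X$ (the leading digit being non-zero because $n>0$). Hence $\ld_l(m(X))=n$ and $k(m(X))=k(n)+k(X)$, which shows that $X\in\cX^{(k)}$ is equivalent to $k(m(X))=k(n)+k$. The none-string case $|X|=0$ is handled transparently, yielding $m=n$ with $\ld_l(n)=n$ and $k(n)+0=k(n)$ as required.

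For the inverse of $\Phi$: given any $m>0$ with $\ld_l(m)=n$, set $q':=l(m)-l\geq0$; then $X$ is defined as the unique string of length $q'$ with $n(X)=m-n\cdot b^{q'}$, and one checks $\Phi(X)=m$ and that $q'$ is forced by $l(m)$, so $\Phi$ is both injective and surjective onto the claimed set. Summing $1/m(X)$ over $X\in\cX^{(k)}$ through this bijection converts the left side into the constrained harmonic sum on the right. The only subtlety worth dwelling on is the length count $l(m(X))=l+|X|$ (which depends on $n>0$ so that $X(n)$ really contributes $l$ digits), since without this the counting of occurrences of $d$ in the minimal representation would not split cleanly as $k(n)+k(X)$.
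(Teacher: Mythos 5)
Your proposal is correct and follows essentially the same route as the paper: expand the integral into $\sum_{X\in\cX^{(k)}} 1/(n\cdot b^{|X|}+n(X))$ and identify the denominators, via the concatenation bijection, with the integers $m$ satisfying $\ld_l(m)=n$ and $k(m)=k(n)+k$. You merely spell out in more detail the injectivity/surjectivity and the length count $l(m(X))=l+|X|$, which the paper leaves implicit (and comments on only in the remark following the proposition).
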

\begin{proof}
  The measure $\mu_k$ is defined as an infinite weighted sum of Dirac masses
  indexed by strings $X$ having exactly $k$ occurrences of the digit $d$.  We
  obtain
\[ \int_{[0,1)} \frac{d\mu_k(x)}{n+x} =
   \sum_{X\in \cX^{(k)}} \frac1{b^{|X|}(n + n(X)/b^{|X|})} =
   \sum_{X\in \cX^{(k)}} \frac1{ n\cdot b^{|X|} + n(X)}\;.
\]
The set of denominators present in this last sum is exactly the set of positive
integers with $\ld_l(m) = n$ and $k(m) = k(n) + k$.
\end{proof}
\noindent
If $d\neq0$, the contributions of any $x$ in the support of $\mu_k$ are of
the type $1/m$, $1/(bm)$, $1/(b^2m)$, \dots, as $0$ can be appended as
trailing element of a string $X$ without modifying the number of occurrences of $d$
nor the $b$-imal number $x=n(X)/b^{|X|}$.  If we index rather by the strings
as done here, there is a one-to-one correspondance, which proves more convenient.
The explanations in our earlier work \cite{burnolkempner} would have been a
bit simplified by this language, which however requires the additional notation
defined here.

As the positive measure with support in the unit interval is finite,
it has moments of all orders and these moments are the key quantities in our
analysis. We define, for $k\geq0$ and $m\geq0$:
\begin{equation*}
   u_{k;m} = \int_{[0,1)} x^m \,d\mu_k(x).
\end{equation*}
Except for the sole case of $k=0$, $b=2$, $d=1$, for which $\mu_0$ is a Dirac
mass
of weight $2$ at the origin, and $u_{0;m}=0$ for $m\geq1$, $(u_{k;m})_{m\geq0}$
is a strictly decreasing sequence converging to zero.
This follows by dominated convergence from $\mu_k$ being supported in $[0,1)$.  One can also argue elementarily as in [Proof of Theorem 4]{burnolkempner}.
\begin{corollary}\label{cor:alt}
  Let $k\geq0$ and let $n$ be a positive integer having $k(n)\leq k$
  occurrences of the digit $d$.  The contribution to $H_{b,d}^{(k)}$ from
  the denominators ``starting with (identical digits as in) $n$'' can be computed as an
  alternating series:
  \[ \sideset{}{^{(k)}}\sum_{\ld_l(m)=n}\frac1m = \sum_{m=0}^\infty
    (-1)^m \frac{u_{k-k(n);m}}{n^{m+1}}\;.\]
\end{corollary}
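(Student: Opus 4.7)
The plan is to derive the corollary directly from Proposition \ref{prop:integralformula} by expanding $1/(n+x)$ as a geometric series in $x/n$ and integrating term by term against the appropriate measure. Applying that proposition with its index $k$ replaced by $k-k(n)$ (which is $\geq 0$ by the hypothesis $k(n)\leq k$) yields immediately
\[
  \int_{[0,1)}\frac{d\mu_{k-k(n)}(x)}{n+x}
  \;=\; \sideset{}{^{(k)}}\sum_{\ld_l(m)=n}\frac{1}{m},
\]
since $k(n)+(k-k(n))=k$. The remaining task is to check that this integral equals $\sum_{m\geq 0}(-1)^m u_{k-k(n);m}/n^{m+1}$.

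Formally, I would write $1/(n+x)=\sum_{m\geq 0}(-x)^m/n^{m+1}$ and integrate termwise, invoking the definition of $u_{j;m}$ as the $m$-th power moment of $\mu_j$. For $n\geq 2$ this is routine: on the support $[0,1)$ of the measure one has $|x/n|\leq 1/2$, so the geometric series converges uniformly, and the swap against the finite measure $\mu_{k-k(n)}$ is immediate.

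The only subtle case is $n=1$, where the expansion converges only conditionally on $[0,1)$. I would treat it through the partial sums
\[
  S_M(x)\;=\;\sum_{m=0}^{M}\frac{(-x)^m}{n^{m+1}}\;=\;\frac{1-(-x/n)^{M+1}}{n+x},
\]
which converge pointwise to $1/(n+x)$ on $[0,1)$ and are uniformly bounded by $2/n$ for all $M$ and all $x\in[0,1)$. Since $\mu_{k-k(n)}$ has finite total mass $b$, the dominated convergence theorem gives $\int S_M\,d\mu_{k-k(n)}\to\int d\mu_{k-k(n)}(x)/(n+x)$, and the left-hand side is by construction the $M$-th partial sum of the alternating series in the statement. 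Independent convergence of that alternating series is guaranteed by Leibniz's test, using the monotone decrease $u_{k-k(n);m}\downarrow 0$ noted just before the corollary.

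The $n=1$ situation is the only real obstacle; elsewhere the corollary is a one-line consequence of term-by-term integration of a uniformly convergent geometric series against the finite measure of Proposition \ref{prop:integralformula}.
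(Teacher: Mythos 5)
Your proposal is correct and follows essentially the same route as the paper: apply Proposition \ref{prop:integralformula} with the index $k-k(n)$, expand $1/(n+x)$ as a geometric series, integrate term by term (uniform convergence handling $n\geq 2$), and treat $n=1$ separately. The only cosmetic difference is at $n=1$, where you invoke dominated convergence on the closed-form partial sums while the paper bounds the integrated remainders by the moments $u_{k-k(n);m+1}\to 0$; both are valid and equally short.
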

\begin{proof}
  We use the formula of Proposition \ref{prop:integralformula} with $k-k(n)$.
  For $n>1$, $1/(n+x)=1/n - x/n ^2 +x^2/n^3-\dots$ converges absolutely and
  uniformly with respect to $x\in [0,1)$ (even inclusive of $x=1$) and we can
  thus integrate term per term.  For $n=1$, the remainders after
  integration will be up to sign the integrals of $x^{m+1}/(1+x)$ and thus
  their absolute values are bounded by $\mu_{k-k(n);m+1}$, and they converge
  to zero.  Interchanging summation and integration is thus valid
  in that case too.
\end{proof}
\begin{remark}
  For $k=0$, $b=2$, $d=1$, there is no positive integer with $0$ occurrence of
  the digit $1$, so the statement is empty.
\end{remark}
We define \emph{complementary moments}
$v_{j;m}=\int_{[0,1)}(1-x)^m\,d\mu_j(x)$ for $j$, $m$ non-negative integers.
\begin{corollary}\label{cor:pos}
  Let $k\geq0$ and let $n$ be a positive integer
  of length $l$ and having at most $k$ occurrences of the digit $d$,
  i.e., $k(n)\leq k$.  Then
  \[\sideset{}{^{(k)}}\sum_{\ld_l(m)=n}\frac1m = \sum_{m=0}^\infty
    \frac{v_{k-k(n);m}}{(n+1)^{m+1}}\;.\]
  The right-hand side is a positive series with geometric convergence.
\end{corollary}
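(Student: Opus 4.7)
The plan is to run the same template as Corollary \ref{cor:alt}, but expand $(n+x)^{-1}$ about the point $x=1$ from the \emph{left} rather than about $x=0$. Concretely, I would start from the integral representation of Proposition \ref{prop:integralformula} applied with $k$ replaced by $k-k(n)$,
\[
  \sideset{}{^{(k)}}\sum_{\ld_l(m)=n}\frac1m \;=\; \int_{[0,1)}\frac{d\mu_{k-k(n)}(x)}{n+x},
\]
and rewrite the integrand as
\[
  \frac1{n+x} \;=\; \frac1{(n+1)-(1-x)} \;=\; \frac1{n+1}\cdot\frac1{1-\frac{1-x}{n+1}} \;=\; \sum_{m=0}^\infty \frac{(1-x)^m}{(n+1)^{m+1}}.
\]

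The main technical point is the interchange of the sum and the integral against $\mu_{k-k(n)}$, and here the situation is actually cleaner than in Corollary \ref{cor:alt}: for any $x\in[0,1)$ and any positive integer $n$, the ratio $\tfrac{1-x}{n+1}$ is bounded uniformly by $\tfrac{1}{n+1}\leq\tfrac12$, so the geometric series converges uniformly on $[0,1)$. Combined with the fact that $\mu_{k-k(n)}$ has finite total mass $b$, this legitimates the termwise integration (alternatively, since all summands are non-negative, Tonelli alone suffices). After the interchange, the integral of $(1-x)^m$ against $\mu_{k-k(n)}$ is by definition $v_{k-k(n);m}$, yielding the asserted identity.

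For the last sentence of the statement, I would note that $(1-x)^m>0$ throughout the support $[0,1)$ of $\mu_{j}$, and the measure has positive total mass $b$, so every $v_{j;m}$ is strictly positive; this makes the series one of positive terms. Geometric convergence follows from the crude bound $v_{j;m}\leq v_{j;0} = \mu_j([0,1)) = b$, which gives $m$-th term at most $b/(n+1)^{m+1}$, decaying at ratio $1/(n+1)\leq 1/2$.

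I do not anticipate any real obstacle: the only place where care is genuinely required is the uniform-convergence justification, and this is a triviality because the pole of $1/(n+x)$ at $x=-n$ is at distance $n+1\geq 2$ from the support $[0,1)$, comfortably larger than the diameter of that support. The argument is in fact strictly simpler than that of Corollary \ref{cor:alt}, which had to treat $n=1$ separately.
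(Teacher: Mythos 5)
Your proposal is correct and follows essentially the same route as the paper: expand $\frac{1}{n+x}=\sum_{m\geq0}\frac{(1-x)^m}{(n+1)^{m+1}}$ and integrate term by term against $\mu_{k-k(n)}$ using Proposition \ref{prop:integralformula}. The extra justifications you supply (Tonelli/uniform convergence, positivity of the $v_{j;m}$, the $v_{j;m}\leq b$ bound for geometric decay) are all sound and merely make explicit what the paper leaves implicit.
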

\begin{proof}
  This follows from
\[
  \frac1{n+x} = \frac1{n+1-(1-x)}= \sum_{m=0}^\infty
  \frac{(1-x)^m}{(n+1)^{m+1}}\;,
\]
and term-by-term integration on $[0,1)$ against the measure $\mu_k$.
\end{proof}
\begin{proposition}\label{prop:series}
  Let $k\geq0$ and $l\geq1$.  Then
  \begin{equation}\label{eq:irwink}
      H^{(k)} = \sum_{1\leq j < l} S_j^{(k)} + b S_l^{(\leq k)}
      + \sum_{m=1}^\infty (-1)^m \sum_{0\leq i\leq k}
      u_{k-i;m}\sum_{\substack{l(n)=l\\k(n)=i}}\frac{1}{n^{m+1}}\;.
  \end{equation}
  Equivalently
  \begin{equation*}
      H^{(k)}= \sum_{1\leq j < l} S_j^{(k)} + b S_l^{(\leq k)} +
      \sum_{m=1}^\infty (-1)^{m} \sum_{\substack{l(n)=l\\k(n)\leq k}} \frac{u_{k-k(n);m}}{n^{m+1}}\;.
  \end{equation*}
  One has similarly
  \begin{equation*}
      H^{(k)}= \sum_{1\leq j < l} S_j^{(k)} + b  \sum_{\substack{l(n)=l\\k(n)\leq k}} \frac{1}{n+1}+
      \sum_{m=1}^\infty \sum_{\substack{l(n)=l\\k(n)\leq k}} \frac{v_{k-k(n);m}}{(n+1)^{m+1}}\;.
  \end{equation*}
\end{proposition}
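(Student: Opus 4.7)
The plan is to start from the decomposition $H^{(k)}=\sum_{j\geq 1} S_j^{(k)}$, keep the first $l-1$ blocks as they appear on the left-hand side of each stated formula, and rewrite the tail $\sum_{j\geq l}S_j^{(k)}$ by grouping integers according to their length-$l$ prefix. Each positive integer $m$ with $l(m)\geq l$ and $k(m)=k$ has a unique prefix $n=\ld_l(m)$ of length $l$, which must satisfy $k(n)\leq k$; so by unconditional rearrangement of positive terms,
\[
\sum_{j\geq l} S_j^{(k)}
 \;=\; \sum_{\substack{l(n)=l\\ k(n)\leq k}}\,\sideset{}{^{(k)}}\sum_{\ld_l(m)=n} \frac1m\;.
\]
By Proposition \ref{prop:integralformula}, each inner sum is $\int_{[0,1)} d\mu_{k-k(n)}(x)/(n+x)$, which will then be expanded via Corollary \ref{cor:alt} or Corollary \ref{cor:pos}.

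For the third (positive) representation I would invoke Corollary \ref{cor:pos} to replace each inner sum by $\sum_{m\geq 0} v_{k-k(n);m}/(n+1)^{m+1}$. All terms being non-negative, Fubini--Tonelli immediately permits swapping the finite sum over $n$ with the infinite sum over $m$; separating off the $m=0$ term, equal to $b/(n+1)$ because $v_{j;0}=b$, produces the stated formula.

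For the two equivalent $u$-representations I would apply Corollary \ref{cor:alt} to write each inner sum as $\sum_{m\geq 0}(-1)^m u_{k-k(n);m}/n^{m+1}$. The $m=0$ contribution is $b/n$ (using $u_{j;0}=b$), and summing over $n$ reproduces the term $b S_l^{(\leq k)}$. It remains to interchange the finite sum over prefixes $n$ with the infinite sum over $m\geq 1$. For $l\geq 2$ one has $n\geq b\geq 2$ and the iterated series $\sum_{m\geq 0} u_{k-k(n);m}/n^{m+1}$ is dominated by a convergent geometric series of ratio $1/n$, so absolute convergence makes the swap routine. For $l=1$ the prefixes $n\geq 2$ are handled in the same way, while the single possible contribution from $n=1$ is a convergent alternating series that can be added in term by term on $m$. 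The first formula is obtained from the second by regrouping the (now-legitimate) finite inner sum according to $i=k(n)$.

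The main obstacle is this last interchange in the alternating representation, and specifically the case $l=1$ with $n=1$, where the inner series is only conditionally convergent and must be split off from the absolutely convergent $n\geq 2$ contributions before the order of summation is reversed. Everything else is a direct combination of Proposition \ref{prop:integralformula} with Corollaries \ref{cor:alt} and \ref{cor:pos}, plus the unconditional regrouping of positive terms that expresses $\sum_{j\geq l}S_j^{(k)}$ as a sum of inner contributions indexed by the prefix $n$.
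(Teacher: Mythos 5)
Your proposal is correct and follows essentially the same route as the paper: split off the blocks of length $<l$, group the tail by its length-$l$ prefix $n$, apply Corollary \ref{cor:alt} or Corollary \ref{cor:pos} to each prefix, and use $u_{j;0}=v_{j;0}=b$ to extract the $m=0$ term. The only extra care you take --- the interchange of the sum over $n$ with the sum over $m$ in the alternating case --- is in fact automatic, since the sum over prefixes $n$ is finite and a finite sum always commutes with limits of partial sums, so no absolute-convergence or $n=1$ case distinction is needed.
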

\begin{proof}
  Just apply Corollary \ref{cor:alt}, or Corollary \ref{cor:pos},
  to each integer $n$ of length
  $l$ and such that $k(n)\leq k$, then use $u_{k-i;0}=b=v_{k-i;0}$ to
  handle the contributions from $m=0$.
\end{proof}

\section{Integral Identities, Recurrence and Asymptotics of Moments}

The next lemma will allow us to obtain recurrence formulas for the moments.
\begin{lemma}\label{lem:int}
  Let $f$  be a bounded function on $[0,b)$.  Let $k\geq1$.
  One has
  \begin{equation*}
    \int_{[0,1)} f(bx)\,d\mu_k(x) = \frac1b\sum_{a\neq d}\int_{[0,1)} f(a+x)\,d\mu_k(x) +
                                  \frac1b \int_{[0,1)} f(d+x)\,d\mu_{k-1}(x),
  \end{equation*}
  where the first summation is over all digits distinct from $d$.  For $k=0$,
  one has
\begin{equation}\label{eq:intmu0}
\int_{[0,1)} f(bx)\,d\mu_0(x) = f(0) + \frac1b\sum_{a\neq d}\int_{[0,1)} f(a+x)\,d\mu_0(x).
\end{equation}
\end{lemma}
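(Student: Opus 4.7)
The plan is to unwind the definition of $\mu_k$ as a weighted sum of Dirac masses and reorganize the resulting series by the leading digit of each indexing string. First I would write
\[
\int_{[0,1)} f(bx)\,d\mu_k(x) = \sum_{X \in \cX^{(k)}} b^{-|X|}\, f\bigl(b\cdot n(X)/b^{|X|}\bigr),
\]
noting that, because $f$ is bounded and $\mu_k$ has finite total mass $b$, this series converges absolutely and may be regrouped freely.

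Next, each non-empty string decomposes uniquely as $X = (a, X')$, where $a\in\sD$ is the leading digit and $X'$ is the tail of length $|X|-1$. Under this bijection one has $n(X) = a\cdot b^{|X|-1} + n(X')$, so $b\cdot n(X)/b^{|X|} = a + n(X')/b^{|X'|}$, and the weight factors as $b^{-|X|} = b^{-1}\cdot b^{-|X'|}$. The key combinatorial point is that the number of occurrences of $d$ in $X$ equals that in $X'$ when $a\neq d$ and is one more when $a=d$. Hence, grouping non-empty strings in $\cX^{(k)}$ by leading digit: for $a\neq d$, the tails $X'$ range over $\cX^{(k)}$; for $a=d$, which forces $k\geq1$, they range over $\cX^{(k-1)}$.

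For $k\geq 1$, the none-string is absent from $\cX^{(k)}$, so this partition exhausts the index set, and summing produces exactly
\[
\frac{1}{b}\sum_{a\neq d}\int_{[0,1)} f(a+y)\,d\mu_k(y) \;+\; \frac{1}{b}\int_{[0,1)} f(d+y)\,d\mu_{k-1}(y).
\]
For $k=0$ I would first peel off the none-string contribution: its weight is $1$ and it sits at $x=0$, so $f(bx)=f(0)$. The remaining strings in $\cX^{(0)}$ are non-empty with leading digit in $\sD\setminus\{d\}$, and the same leading-digit decomposition (now with no $a=d$ term) yields \eqref{eq:intmu0}.

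There is no serious obstacle. The only care required is the separate accounting of the empty-string contribution in the $k=0$ case, and the absolute convergence that permits the reindexing, which is immediate from the boundedness of $f$ and $\mu_k([0,1))=b<\infty$.
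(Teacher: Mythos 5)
Your proposal is correct and follows essentially the same route as the paper's own proof: both decompose each non-empty string by its leading digit $a$, observe that the tail lies in $\cX^{(k)}$ when $a\neq d$ and in $\cX^{(k-1)}$ when $a=d$, and handle the none-string separately in the $k=0$ case to produce the $f(0)$ term. Your explicit remark on absolute convergence justifying the regrouping is a small addition the paper leaves implicit, but it does not change the argument.
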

\begin{remark}
  Equation \eqref{eq:intmu0} was already stated in \cite[Lemma 7]{burnolkempner}
  except for
  $b=2$ and $d=1$: indeed, reference \cite{burnolkempner} has a set $\sA\subset\sD$ of
  so-called admissible digits and assumes that this set is not reduced to the
  singleton $\{0\}$, which is however what happens for $(b,d,k)=(2,1,0)$.  As the measure
  $\mu_0$ then equals twice the Dirac at the origin,
  Equation \eqref{eq:intmu0} reads in that special case
  $2f(0)=f(0)+\frac12 (2f(0))$.
\end{remark}
\begin{proof}[Proof of Lemma \ref{lem:int}]
  Suppose $k\geq1$. Each string $X$ of length $l$ and having exactly $k$
  occurrences of $d$ contributes $b^{-l}f(n(X)/b^{l-1})$.  For $l=0$, $X$ is
  the none-string and does not contribute anything as we have supposed
  $k\geq1$.  For $l\geq1$ let $a$ be the leading digit of $X$.  If $a\neq d$,
  then the $l-1$ remaining digits of $X$ give a string $Y$ which again has
  exactly $k$ occurrences of $d$.  So the strings with initial digit
  $a\neq d$ contribute the sum of the $b^{-1}b^{1-l}f(a+n(Y)/b^{l-1})$ over
  all $Y$ with $k$ occurrences of $d$.  This is $b^{-1}\int_{[0,1)}
  f(a+x)\,d\mu_k(x)$.  If $a=d$, then $Y$ (which may be the none-string) has
  $k-1$ occurrences of $d$.  Hence, this contributes $b^{-1}\int_{[0,1)}
  f(d+x)\,d\mu_{k-1}(x)$.

  We also consider $k=0$.  Here the none-string contributes $f(0)$ to the
  integral on the left-hand side.  The strings $X$ of length $l \geq1$ in
  $\cX^{(0)}$ do not contain the digit $d$.  So here the possible $a$ are
  distinct from $d$, and the tail string $Y$, possibly the
  none-string, automatically also belongs to $\cX^{(0)}$.  Hence, we have
  Equation \eqref{eq:intmu0}.
\end{proof}

Recall from \cite[Proposition 8]{burnolkempner} the recurrence
(obtained as a corollary to Equation \eqref{eq:intmu0}):
  \begin{equation}\label{eq:recurr0}
    m\geq1\implies (b^{m+1} - b +1) u_{0;m} =
   \sum_{j=1}^{m} \binom{m}{j} (\sum_{a\neq d} a^j) u_{0;m-j}.
  \end{equation}
We note that this formula requires $m\geq1$.  It also works in
the $k=0$, $b=2$, $d=1$ case, as all moments for $m\geq1$
then vanish and the power sum for $j=m\geq1$ is $0^m=0$.  We obtain
recurrences for the moments of the measures $\mu_k$, $k\geq1$.
\begin{proposition}\label{prop:recurrence}
  Let $k\geq1$.  The moments $u_{k;m}$ of the $k$-th measure $\mu_k$ are
  related to those of $\mu_{k-1}$ via the recurrence
  \begin{equation}\label{eq:recurr}
    (b^{m+1} - b +1) u_{k;m}=\sum_{j=1}^{m}\binom{m}{j}(\sum_{a\neq d} a^j)u_{k;m-j}
    + \sum_{j=0}^{m}\binom{m}{j}d^j u_{k-1;m-j}.
  \end{equation}
  One has $u_{k;0} = b$ and the above identity also holds for $m=0$.
\end{proposition}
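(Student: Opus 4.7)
The plan is to specialize Lemma \ref{lem:int} to the polynomial test function $f(y)=y^m$, which is bounded on $[0,b)$ for every fixed $m\geq0$, and then expand both sides with the binomial theorem.

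First I would compute each piece. On the left, $\int_{[0,1)} (bx)^m\,d\mu_k(x) = b^m u_{k;m}$. On the right, for any digit $a$, the binomial expansion gives
\begin{equation*}
\int_{[0,1)}(a+x)^m\,d\mu_k(x) = \sum_{j=0}^m \binom{m}{j} a^j u_{k;m-j},
\end{equation*}
and the analogous identity holds for $\mu_{k-1}$ in place of $\mu_k$. Substituting these into the identity of Lemma \ref{lem:int} and clearing the factor $1/b$ yields
\begin{equation*}
b^{m+1} u_{k;m} = \sum_{a\neq d}\sum_{j=0}^m \binom{m}{j} a^j u_{k;m-j} + \sum_{j=0}^m \binom{m}{j} d^j u_{k-1;m-j}.
\end{equation*}

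Next I would isolate the $j=0$ contribution in the first double sum: since there are exactly $b-1$ digits $a\neq d$ and $a^0=1$, it contributes $(b-1)u_{k;m}$. Moving this to the left-hand side produces the stated recurrence, as the coefficient of $u_{k;m}$ becomes $b^{m+1}-(b-1)=b^{m+1}-b+1$. Finally, to check that the recurrence also holds at $m=0$, observe that both sums on the right collapse to the single term $u_{k-1;0}$, while the left-hand side is simply $u_{k;0}$; thus the identity at $m=0$ reads $u_{k;0}=u_{k-1;0}$, which combined with $u_{0;0}=b$ (established earlier as the total mass of $\mu_0$) and induction on $k$ gives $u_{k;0}=b$ for all $k\geq0$.

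There is no real obstacle here: the argument is a one-line substitution once Lemma \ref{lem:int} is available, and the only mild point is keeping track of the separation of the $j=0$ term so that the coefficient of $u_{k;m}$ on the left-hand side comes out as $b^{m+1}-b+1$ rather than $b^{m+1}$.
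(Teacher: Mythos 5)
Your proposal is correct and follows essentially the same route as the paper: apply Lemma \ref{lem:int} to $f(x)=x^m$, expand $(a+x)^m$ by the binomial theorem, and peel off the $j=0$ term contributing $(b-1)u_{k;m}$ to get the coefficient $b^{m+1}-b+1$. The only quibble is the phrase ``both sums on the right collapse to the single term $u_{k-1;0}$'' at $m=0$ --- the first sum is empty there, so the right-hand side as a whole reduces to $u_{k-1;0}$; your conclusion is nonetheless the intended one.
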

\begin{proof}
  We apply Lemma \ref{lem:int} to the function $f(x) = x^m$, $m\geq1$.  After
  multipliying by $b$ we have on the left-hand side $b^{m+1}u_{k;m}$.  On the
  right-hand side, we apply the binomial formula (which requires the
  convention $0^0=1$) and separate the $j=0$ contribution from the first
  sum, obtaining
  \begin{align*}
    &\sum_{a\neq d} \sum_{j=0}^{m}\binom{m}{j}a^j u_{k;m-j}
   + \sum_{j=0}^{m}\binom{m}{j}d^j u_{k-1;m-j}\\
     &=(b-1)u_{k;m}+\sum_{j=1}^{m}\binom{m}{j}(\sum_{a\neq d} a^j)u_{k;m-j} + \sum_{j=0}^{m}\binom{m}{j}d^j u_{k-1;m-j}.
  \end{align*}
  This gives the stated formula.
\end{proof}

The following result is important both for theory and practice.
\begin{proposition}\label{prop:mainestimate}
  For each $m\geq1$ the sequence $(u_{k;m})_{k\geq0}$ is strictly increasing and
  converges to $b/(m+1)$:
  \begin{equation*}
      u_{0;m}< u_{1;m}<\dots < u_{k;m} \mathop{\longrightarrow}\limits_{k\to\infty} \frac{b}{m+1}\;.
  \end{equation*}

  Let $f=\max(\sD\setminus\{0,d\})$.  The case $f=0$ happens only if $b=2$ and
  $d=1$.  Suppose $f > 0$, then for all $m\geq1$ one has
  \begin{equation}\label{eq:kzerobound}
      \frac{1}{m+1}(\frac{f}{b-1})^{m}<u_{0;m}<\frac{b}{m+1}(\frac{f}{b-1})^{m}.
  \end{equation}
  If $b=2$ and $d=1$ one has $u_{0;m}=0$ for $m\geq1$ and
  $u_{1;m}=2/(2^{m+1}-1)$ for $m\geq0$.
\end{proposition}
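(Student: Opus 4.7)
The plan has four parts: strict monotonicity of $k\mapsto u_{k;m}$ for $m\geq 1$, its convergence to $b/(m+1)$, the two-sided bounds on $u_{0;m}$, and the degenerate case $b=2$, $d=1$. For monotonicity, I set $\Delta_k(m) := u_{k;m}-u_{k-1;m}$ for $k\geq 1$, using the convention $u_{-1;\cdot}\equiv 0$ so that $\Delta_1(m)=u_{0;m}$. Subtracting the recurrences of Proposition \ref{prop:recurrence} for $u_{k;m}$ and $u_{k-1;m}$ gives
\begin{equation*}
(b^{m+1}-b+1)\,\Delta_k(m) = \sum_{j=1}^m \binom{m}{j}\gamma_j\,\Delta_k(m-j) + \sum_{j=0}^m \binom{m}{j}\,d^j\,\Delta_{k-1}(m-j),
\end{equation*}
and a double induction (outer on $k$, inner on $m$, base $\Delta_k(0)=0$) shows $\Delta_k(m)>0$: the first sum is non-negative by inner induction, and the second is strictly positive because, for $k\geq 2$, its $j=0$ term $\Delta_{k-1}(m)$ is $>0$ by outer induction, while for $k=1$ its $j=0$ term $u_{0;m}$ is positive when $d=0$ and its $j=m$ term $d^m u_{0;0}=b\,d^m$ is positive when $d\geq 1$.

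For the limit, $(u_{k;m})_k$ is increasing and bounded above by $\mu_k([0,1))=b$, hence converges to some $u^\ast(m)$. Passing to the limit in the recurrence and using $\gamma_j+d^j=\sum_{a=0}^{b-1}a^j$ yields
\begin{equation*}
(b^{m+1}-b)\,u^\ast(m) = \sum_{j=1}^m \binom{m}{j}\Bigl(\sum_{a=0}^{b-1} a^j\Bigr)u^\ast(m-j).
\end{equation*}
I then induct on $m$ from $u^\ast(0)=b$: substituting $u^\ast(m-j)=b/(m-j+1)$, applying the Pascal identity $\binom{m}{j}/(m-j+1)=\binom{m+1}{j}/(m+1)$, and telescoping $\sum_{a=0}^{b-1}\bigl((a+1)^{m+1}-a^{m+1}\bigr)=b^{m+1}$ collapse the right-hand side to $b(b^{m+1}-b)/(m+1)$, forcing $u^\ast(m)=b/(m+1)$.

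For the bounds, write $r=f/(b-1)$. When $d\neq b-1$, $r=1$ and the upper bound is immediate from the first two parts: $u_{0;m}<u_{1;m}<u^\ast(m)=b/(m+1)$. When $d=b-1$, $r<1$ and $\mu_0$ is supported in $[0,r]$; I prove $u_{0;m}<br^m/(m+1)$ by induction on $m$ via the recurrence for $u_{0;m}$, reducing the inductive step to
\begin{equation*}
\sum_{a=0}^{b-2}\bigl[(1+\beta_a)^{m+1}-1-\beta_a^{m+1}\bigr]<b^{m+1}-b+1, \qquad \beta_a:=a(b-1)/(b-2),
\end{equation*}
which follows by writing each summand as $(m+1)\int_{\beta_a}^{\beta_a+1}t^m\,dt$, observing that the $b-2$ unit intervals $[\beta_a,\beta_a+1]$ are pairwise disjoint (gap $1/(b-2)$ between consecutive ones) and contained in $[\beta_1,b]$, and using the strict inequality $\beta_1=(b-1)/(b-2)>1$.

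The lower bound $u_{0;m}>r^m/(m+1)$ cannot be obtained by the same induction (which would require the reverse of the above inequality). I would instead integrate by parts, $u_{0;m}=br^m-m\int_0^r x^{m-1}F(x)\,dx$ with $F(x):=\mu_0([0,x])$, and then bound $F$ from above via the functional equation $F(y/b^{-})=1+\mathbf{1}_{d>0}\,F(y^{-})/b$ for $y\in(0,1]$ obtained by plugging $f=\mathbf{1}_{[0,y)}$ into Lemma \ref{lem:int}. Extracting a sharp enough pointwise upper bound on $F$ near the right endpoint of the support is the step I expect to be the main obstacle. In the degenerate case $b=2$, $d=1$, one has directly $\mu_0=2\delta_0$ (all admissible strings consist of zeros), so $u_{0;m}=0$ for $m\geq 1$; the $k=1$ recurrence then collapses (only its $j=m$ term survives, since $\gamma_j=0$ for $j\geq 1$) to $(2^{m+1}-1)u_{1;m}=u_{0;0}=2$, yielding $u_{1;m}=2/(2^{m+1}-1)$.
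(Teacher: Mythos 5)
Your treatment of the strict monotonicity in $k$, of the limit $b/(m+1)$, and of the degenerate case $b=2$, $d=1$ is correct and coincides in substance with the paper's proof: the paper also compares the recurrences \eqref{eq:recurr} and \eqref{eq:recurr0} by a double induction (your difference sequence $\Delta_k$ is just a tidier bookkeeping of the same argument, with strictness coming from the same $j=0$, resp.\ $j=m$, term of the second sum), and it also obtains the limit by passing to the limit in \eqref{eq:recurr} and using the identity $\sum_{j=1}^{M}\binom{M+1}{j}\sum_{a=0}^{b-1}a^{j}=b^{M+1}-b$. A few slips that do not affect the substance: your convention gives $\Delta_0(m)=u_{0;m}$, not $\Delta_1(m)=u_{0;m}$; there are $b-1$ intervals $[\beta_a,\beta_a+1]$ (for $a=0,\dots,b-2$), not $b-2$, and their union sits in $[0,b]$, not $[\beta_1,b]$; in the degenerate case the second sum collapses to its $j=m$ term because $u_{0;m-j}=0$ for $m-j\geq1$, not because $\gamma_j=0$.

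The genuine issue is the two-sided bound \eqref{eq:kzerobound}, which the paper does not reprove here: it imports it from Proposition 10 of \cite{burnolkempner}. Your upper bound does go through --- for $d\neq b-1$ it is indeed immediate from strict monotonicity plus the limit, and for $d=b-1$ your induction reduces correctly to $\sum_{a=0}^{b-2}\bigl[(1+\beta_a)^{m+1}-1-\beta_a^{m+1}\bigr]<b^{m+1}-b+1$, which the disjoint-unit-intervals comparison establishes (the strictness coming from the gap $(1,\beta_1)$ on which $t^m>0$). But the lower bound $\frac{1}{m+1}\bigl(\frac{f}{b-1}\bigr)^m<u_{0;m}$ is not established: you say yourself that the needed pointwise control of $F(x)=\mu_0([0,x])$ is an obstacle you have not overcome, and the monotonicity/limit statements give no lower bound on the \emph{first} term of the increasing sequence. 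So, taken as a self-contained argument, the proposal is incomplete at exactly this point; a complete write-up must either carry out that estimate (the paper's remark after the proposition sketches a route via the barycentric form of the recurrence for $\sigma_{k;m}=(m+1)u_{k;m}$) or, as the paper does, cite \cite{burnolkempner} for \eqref{eq:kzerobound}.
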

\begin{proof}
  If $b=2$ and $d=1$ the measure $\mu_0$ is $2\delta_0$.  The values of
  $u_{1;m}$ are directly given by the recurrence from Equation \eqref{eq:recurr}
  whose right-hand side in that case only has a single non-zero contribution,
  which is $u_{0;0}=2$ hence the value for $u_{1;m}$.

  The estimate from Equation \eqref{eq:kzerobound} of $u_{0;m}$ (for either $b>2$ or
  $d\neq1$) is from \cite[Proposition 10]{burnolkempner}.

  We prove $u_{1;m}>u_{0;m}$ for all $m\geq1$.  This is already known for
  $b=2$ with $d=1$ as $u_{0;m} = 0$ so we exclude this case in the next
  paragraph.

  We compare the recurrence of the $(u_{1;m})$ sequence (Equation \eqref{eq:recurr}) with
  the one of the $(u_{0;m})$ sequence (Equation \eqref{eq:recurr0}).  They look the same
  apart from the fact that $k=1$ has more contributions, all non-negative.  So
  $u_{1;m}\geq u_{0;m}$ by induction on $m$ (as it is an equality for $m=0$).
  Reexamining Equation \eqref{eq:recurr} we see that the second sum on its
  right-hand side always contains the $j=0$ contribution $u_{0;m}$ which is
  positive.  So in fact in the previous argument we had $(b^{m+1}-b+1)u_{1;m}>
  (b^{m+1}-b+1) u_{0;m}$ for $m\geq1$, hence $u_{1;m}>u_{0;m}$.

  Let $k\geq1$ and suppose we have shown already $u_{k;m}>u_{k-1;m}$
  for all $m\geq1$.

  We consider Equation \eqref{eq:recurr} for $k+1$.  We can suppose
  inductively that $u_{k+1;n}\geq u_{k;n}$ for $0\leq n < m$ as this holds for
  $n=0$. And we know $u_{k;n}\geq u_{k-1;n}$ for all $n$.  Using this we
  obtain a lower bound $(b^{m+1}-b+1)u_{k;m}$ for the right-hand side of Equation
  \eqref{eq:recurr}
  with $k+1$.  Hence,
  $u_{k+1;m}\geq u_{k;m}$.  Thus, this holds for
  all $m$ by induction on $m$.  Reexamining Equation \eqref{eq:recurr} for $k+1$ we see
  that the last sum has the contribution for $j=0$ which is $u_{k;m}$ which
  is known to be greater than $u_{k-1;m}$. So in fact our lower bound is strict and
  $u_{k+1;m}> u_{k;m}$ for all $m\geq1$.  Hence, the conclusion by induction
  on $k$.

  So, for each $m\geq1$, there holds $u_{0;m}< u_{1;m}<\dots < u_{k;m} <
  \dots$.  Further, all are bounded above by $b$ as $u_{k;m}\leq u_{k;0}=b$. So,
  there exists a finite limit $w_{m}=\lim_{k\to\infty} u_{k;m}$.  Letting
  $k\to\infty$ in Equation \eqref{eq:recurr} we obtain for all $m\geq0$:
  \begin{equation*}
    (b^{m+1} - b +1) w_{m}=\sum_{j=1}^{m}\binom{m}{j}(\sum_{a=0}^{b-1} a^j)w_{m-j} + w_m.
  \end{equation*}
  As $u_{k;0}=b$ for all $k\geq0$, $w_0=\lim u_{k;0} = b$.  We prove by induction
  that $w_m =b/(m+1)$ holds for all $m\geq0$.  Assume it is true up to
  $m=M-1$ for some $M\geq1$.  Substituting this into the recurrence relation above (after having
  removed from both sides one copy of $w_M$) leads to
  \begin{equation*}
     (b^{M+1} - b) w_{M}=\sum_{j=1}^{M}\binom{M}{j}(\sum_{a=0}^{b-1} a^j)\frac{b}{M-j+1}\;.
  \end{equation*}
  Note that
  \begin{align*}
    \sum_{j=1}^{M}\binom{M}{j}(\sum_{a=0}^{b-1} a^j)\frac{M+1}{M+1-j} &=
   \sum_{j=1}^{M}\binom{M+1}{j}(\sum_{a=0}^{b-1} a^j)\\
   &= \sum_{a=0}^{b-1} \left((a+1)^{M+1} - a^{M+1} -1\right) = b^{M+1} - b.
  \end{align*}
  So $(M+1)(b^{M+1} - b) w_{M}$ is equal to $b(b^{M+1}
  - b)$.  Hence, $w_M=b/(M+1)$ and this completes the proof.
\end{proof}
\begin{remark}
Defining for all $k\geq0$ and all $m\geq0$
\[ \sigma_{k;m}=(m+1)u_{k;m},\]
 we obtain from Equation \eqref{eq:recurr} ($k\geq1$)
    \begin{equation*}
      (b^{m+1} - b +1) \sigma_{k;m}=
      \sum_{j=1}^{m}\binom{m+1}{j}\sum_{a\neq d} a^{j}\sigma_{k;m-j}
      +\sum_{j=0}^{m}\binom{m+1}{j}d^{j} \sigma_{k-1;m-j}.
  \end{equation*}
  We note that this is a barycentric equality with non-negative coefficients:
  \begin{align*}
      \sum_{j=1}^{m}\binom{m+1}{j}(\sum_{a\neq d} a^j)
      &+ \sum_{j=0}^{m}\binom{m+1}{j}d^j  \\
&= \sum_{a=0}^{b-1} \Bigl((a+1)^{m+1}-a^{m+1} - 1\Bigr) + 1=b^{m+1}-b + 1.
 \end{align*}
  An alternative proof of $\frac{1}{m+1}(\frac{f}{b-1})^{m} < u_{k;m} <
  \frac b{m+1}$ can be based upon this.
\end{remark}

\section{Convergence to Lebesgue Measure and Farhi Theorem}

In terms of $g(x) = f(bx)$, the integral formula of Lemma \ref{lem:int}  becomes
  \begin{equation*}
    \int_{[0,1)} g(x)\,d\mu_k(x) =
    \frac1b\sum_{a\neq d}\int_{[0,1)} g(\frac ab+\frac xb)\,d\mu_k(x) +
    \frac1b \int_{[0,1)} g(\frac db +\frac xb)\,d\mu_{k-1}(x).
  \end{equation*}
  This motivates a closer examination of the restrictions of $\mu_k$ to
  sub-intervals such as $[i/b,(i+1)/b)$ for $0\leq i < b$.
\begin{lemma}\label{lem:masses}
  Let $x\in[0,1)$ be a $b$-imal number of depth $l$.  Let $X$ be the string of
  length $l$ such that $n(X)/b^l = x$.
  Let $j$ be the number of occurrences of $d$ in $X$.  Let $l'\geq l$.  Set
  $j'=j$ if $d>0$, and $j'=j+l'-l$ if $d=0$.
  Let $U$ be any subset of the
  open interval $(x,x+b^{-l'})$.  Finally, let $k\in\NN$.
    \begin{itemize}
    \item if $k<j'$ then $\mu_k(U)=0$,
    \item if $k\geq j'$ then $\mu_k(U) = b^{-l'}\mu_{k-j'}(b^{l'} U - b^{l'}
      x)$.
    \end{itemize}
\end{lemma}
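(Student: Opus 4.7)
The plan is to show that, inside the sliver $U \subset (x, x + b^{-l'})$, the measure $\mu_k$ coincides with a rescaled and re-indexed copy of $\mu_{k-j'}$. Concretely, I would set up a bijection between the strings $Y \in \cX^{(k)}$ whose associated point $n(Y)/b^{|Y|}$ lies in $U$ and the strings $Z \in \cX^{(k-j')}$ whose associated point lies in the affine image $V := b^{l'} U - b^{l'} x \subset (0,1)$, and then compare the weights.

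To execute this, I would first pad $X$ with $l' - l$ zeros appended in the low-order positions, producing a string $X'$ of length $l'$ that still satisfies $n(X')/b^{l'} = x$. The identity $k(X') = j'$ then follows by direct inspection: the appended zeros increase the $d$-count by $l' - l$ when $d = 0$ and by $0$ when $d > 0$, matching exactly the two cases in the definition of $j'$. Next, for any $Y \in \cX^{(k)}$ with $n(Y)/b^{|Y|} \in U$, a short length argument rules out $|Y| \leq l'$, since such points lie in $b^{-l'} \ZZ$ and cannot meet the open sliver. Writing $|Y| = l' + r$ with $r \geq 1$, the condition $x < n(Y)/b^{|Y|} < x + b^{-l'}$ is equivalent to $n(Y) = n(X')\, b^r + n(Z)$ for some $Z \in \sD^r$ with $n(Z) \in \{1, \dots, b^r - 1\}$; this forces the top $l'$ digits of $Y$ to reproduce $X'$ digit-for-digit and the remaining $r$ low-order digits to form $Z$, giving the claimed bijection $Y \leftrightarrow Z$.

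Since the digit count is additive on concatenation, $k(Y) = k(X') + k(Z) = j' + k(Z)$, so $Y \in \cX^{(k)}$ amounts to $k(Z) = k - j'$. When $k < j'$ no such $Z$ exists and $\mu_k(U) = 0$, which is the first bullet. When $k \geq j'$, the bijection identifies the weights as $b^{-|Y|} = b^{-l'} \cdot b^{-|Z|}$, and summing yields $\mu_k(U) = b^{-l'} \mu_{k-j'}(V)$, as claimed; the strings $Z$ with $n(Z) = 0$ (namely the none-string and the all-zero strings) are automatically absent from $\mu_{k-j'}(V)$ because $0 \notin V$. The only delicate point of the whole argument is the $d = 0$ versus $d > 0$ bookkeeping that produces the value $k(X') = j'$; everything else is a direct digit-decomposition verification.
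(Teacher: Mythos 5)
Your proof is correct and follows essentially the same route as the paper's: both decompose each contributing string $Y$ as the zero-padded prefix $X'$ (i.e., $X$ followed by $l'-l$ zeros) concatenated with a tail $Z$ having at least one nonzero digit, use additivity of the digit count to reduce membership in $\cX^{(k)}$ to $k(Z)=k-j'$, and match weights via $b^{-|Y|}=b^{-l'}b^{-|Z|}$. Your explicit ruling-out of $|Y|\leq l'$ and the observation that $0\notin V$ excludes the all-zero tails are details the paper leaves implicit, but the argument is the same.
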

\begin{proof}
  If $d>0$,
  $j$ is also the number of occurrences of $d$ in the integer $b^l
  x$, but for $d=0$, $j$ will be greater than that if $x<b^{-1}$ (due to leading
  zeros in $X$).

  Any string $Y$ such that $n(Y)/b^{|Y|}\in (x, x+b^{-l'})$
  has the shape
  \[ Y = X\underbrace{0\dots 0}_{l'-l\text{ zeros}}Z\]
  where $Z=z_1\dots z_p$ and at least one $z_i$ is not
  zero.
  The number of occurrences of $d$ in $Y$ is the sum of $j'$ (which was
  defined depending on whether $d>0$ or $d=0$) with the number of occurrences
  in $Z$.  So if $k<j'$ no such string $Y$ has exactly $k$ occurrences of $d$
  and $\mu_k$ restricts to the zero measure on $(x,x+b^{-l'})$.
  Suppose $k\geq j'$.  Set $y=n(Y)/b^{|Y|}$ and $z=n(Z)/b^{|Z|}$. Thus, $y= x
  + z/b^{l'}$ with $0<z<1$, and $z=b^{l'}(y -x)$. Conversely, any string $Z$
  having at least one non-zero digit can be extended as above to give $Y$
  such that $n(Y)/b^{|Y|}$ is in $(x, x+b^{-l'})$ and $k(Y)=k(Z)+j'$ ($k(T)$
  is the number of occurrences of $d$ in a string $T$). Summing
  over all $Y$ with $n(Y)/b^{|Y|} = y$ and having $k$ occurrences of $d$ (if
  $d=0$, there is only one such $Y$ for each $y$), and over all $Z$ giving
  the same $z$ and having $k-j'$ occurrences of $d$, we get
  $\mu_k(\{y\})=b^{-l'}\mu_{k-j'}(\{z\})$.  Finally, summing over all strings
  $Y$ such that $n(Y)/b^{|Y|}\in U\subset (x, x+b^{-l'})$, we obtain the
  stated formula for $\mu_k(U)$.
\end{proof}
For some half-open intervals, a simple formula showing the behavior of $\mu_k$
is obtained next.
\begin{proposition}\label{prop:halfopen}
  Let $x\in[0,1)$ be of depth $l$ and let $j$ be defined as in the previous lemma.
  For $k<j$ the restriction of $\mu_k$ to the half-open interval
  $[x,x+b^{-l})$ vanishes.  For $k\geq j$ and any subset $U\subset
  [x,x+b^{-l})$ one has $\mu_k(U)=b^{-l}\mu_{k-j}([b^l U-b^l x))$.
\end{proposition}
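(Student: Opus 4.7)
The plan is to decompose $[x, x+b^{-l}) = \{x\} \sqcup (x, x+b^{-l})$ and treat the two pieces separately. Lemma \ref{lem:masses} with $l'=l$ already handles the open sub-interval, since $l'-l=0$ makes both branches of the definition of $j'$ collapse to $j'=j$; the lemma then yields $\mu_k(U \cap (x,x+b^{-l})) = 0$ when $k<j$, and $\mu_k(U \cap (x,x+b^{-l})) = b^{-l}\mu_{k-j}\bigl(b^l(U \cap (x,x+b^{-l})) - b^l x\bigr)$ when $k \geq j$. What remains is the atomic mass $\mu_k(\{x\})$, which must be computed from the definition of $\mu_k$.

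Any string $Y$ with $n(Y)/b^{|Y|}=x$ has length $l' \geq l$ and satisfies $n(Y)=b^{l'-l}n(X)$; since the map $n\colon \sD^{l'} \to \{0,\dots,b^{l'}-1\}$ is a bijection, $Y$ is uniquely the padding of $X$ by $l'-l$ trailing zeros. If $d \neq 0$ every such padding has $k(Y)=j$, so summing $b^{-l'}$ over $l' \geq l$ gives $\mu_j(\{x\}) = b^{1-l}/(b-1)$ and $\mu_k(\{x\}) = 0$ for $k \neq j$. If $d=0$ the padding of length $l+s$ has $k(Y) = j+s$, so only $s=k-j$ contributes, yielding $\mu_k(\{x\}) = b^{-l-(k-j)}$ when $k \geq j$ and $\mu_k(\{x\}) = 0$ otherwise. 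Comparison with Remark \ref{rem:zero} --- which gives $\mu_0(\{0\}) = b/(b-1)$ when $d \neq 0$ and $\mu_i(\{0\}) = b^{-i}$ when $d = 0$ --- confirms that in both cases $\mu_k(\{x\}) = b^{-l}\mu_{k-j}(\{0\})$ when $k \geq j$, and $\mu_k(\{x\}) = 0$ otherwise.

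Combining the atomic and open-interval formulas by additivity, and noting that $y \mapsto b^l y - b^l x$ sends $x$ to $0$ and $(x,x+b^{-l})$ onto $(0,1)$, one obtains $\mu_k(U) = b^{-l}\mu_{k-j}(b^l U - b^l x)$ for every $U \subset [x,x+b^{-l})$ when $k \geq j$, and $\mu_k(U) = 0$ when $k < j$. The main obstacle is the case analysis on $d$ for the atomic contribution: on open intervals Lemma \ref{lem:masses} absorbs trailing zeros uniformly into the correction $j'-j = l'-l$, but at the boundary point $x$ the cases $d > 0$ (trailing zeros are harmless) and $d = 0$ (each trailing zero adds an occurrence of $d$) must be examined separately before they merge back into the uniform statement.
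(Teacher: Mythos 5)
Your proof is correct and follows essentially the same route as the paper: apply Lemma \ref{lem:masses} with $l'=l$ (so $j'=j$) to the open interval $(x,x+b^{-l})$, then handle the atom at $x$ separately by the trailing-zero padding argument, splitting into the cases $d>0$ and $d=0$ and matching against the weights at the origin from Remark \ref{rem:zero}. The only cosmetic difference is that you derive the values of $\mu_k(\{x\})$ from scratch where the paper simply states them.
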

\begin{proof}
  We use the Lemma \ref{lem:masses} with  $l'=l$, $j'=j$.
  For $k<j$ one thus has
  $\mu_k((x,x+b^{-l}))=0$.  And $\mu_k(\{x\})$ is also zero because there are
  already $j>k$ occurrences of the digit $d$ in $x$ (this includes leading
  zeros located after the radix separator).  So $\mu_k([x,x+b^{-l}))=0$.

  Suppose $k\geq j$. We know from Lemma \ref{lem:masses} that
  for any $U\subset (x,x+b^{-l})$,  $\mu_k(U)=b^{-l}\mu_{k-j}(b^lU-b^lx)$.
  There remains to examine what happens for the singleton $\{x\}$.  Let
  $c(k-j)=\mu_{k-j}(\{0\})$.  We need to check that $\mu_k(\{x\}) = b^{-l}
  c(k-j)$.  The value of $\mu_k(\{x\})$ depends
  on whether $d=0$ or $d>0$.  In the former case, $\mu_k(\{x\})=b^{-l-(k-j)}$
  (recall $k\geq j$).  In the latter case, it is equal to
  $b^{-l}(1 - 1/b)^{-1}$ if $k=j$, and vanishes if $k>j$.   Using Remark
  \ref{rem:zero}, we obtain $\mu_k{\{x\}}= b^{-l}c(k-j)$ in all cases, which
  completes the proof.
\end{proof}
\begin{remark}
  In particular the total mass $\mu_k([x,x+b^{-l}))$ is $0$ for $k<j$ and
  $b^{1-l}$ (i.e., $b$ times Lebesgue measure) for $j\geq k$.  The sequence
  $(\mu_k([x,x+b^{-l})))_{k\geq0}$ is thus non-decreasing.

  This is compatible with the moments being increasing as $k$ increases (cf.\@ Proposition
  \ref{prop:mainestimate}).   But the sequences $(\mu_k([t,u))_{k\geq0}$ associated
  with half-open intervals $[t,u)$ can not possibly all be non-decreasing: if
  they were, Equation
  \ref{eq:loglike} from Proposition \ref{eq:loglike} expressing $H^{(k)}$ as a
  log-like integral would cause $(H^{(k)})$ to be as well a non-decreasing
  sequence. But as first proven by
  Farhi \cite{farhi} for $b=10$, and generally here in Proposition
  \ref{prop:farhi}, they actually decrease strictly for $k\geq1$.

  This apparent paradox is explained by the fact that for $l'>l$,
  $\mu_k([x,x+b^{-l'}))$ has a less simple behavior when $k$ varies, than the
  one which is valid for $l'=l$ and described in Proposition
  \ref{prop:halfopen}. The details can be deduced from Lemma \ref{lem:masses}.
  We will only need that starting with $k= j'$, the sequence
  $(\mu_k([x,x+b^{-l'}))$ becomes constant, equal to $b^{1-l'}$.  This follows
  from the next lemma.
\end{remark}
\begin{lemma}\label{lem:l'}
  Let $x\in[0,1)$ be a $b$-imal number of depth $l$ for some $l\in\NN$.  Let
  $l'\geq l$ and let $j$ and $j'$ be defined as in Lemma \ref{lem:masses}.
  Let $k\geq
  j'$.  Let $U$ be any subset of the half-open interval $[x,x+b^{-l'})$.  Then
  $\mu_k(U) = b^{-l'}\mu_{k-j'}(b^{l'} U - b^{l'} x)$.
\end{lemma}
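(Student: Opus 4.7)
The plan is to combine additivity of $\mu_k$ with the open-interval version already proven as Lemma \ref{lem:masses} and a direct calculation at the boundary point $x$. Since $[x,x+b^{-l'}) = \{x\}\sqcup(x,x+b^{-l'})$, write
\begin{equation*}
  U = U_{\mathrm{pt}}\sqcup U_{\mathrm{op}},\qquad U_{\mathrm{pt}} = U\cap\{x\},\qquad U_{\mathrm{op}} = U\cap(x,x+b^{-l'}).
\end{equation*}
By additivity of $\mu_k$ the claimed rescaling formula decouples over the two pieces, so it suffices to verify it on each. On $U_{\mathrm{op}}$, which is a subset of the open interval, Lemma \ref{lem:masses} applies at once in the regime $k\geq j'$ and yields $\mu_k(U_{\mathrm{op}}) = b^{-l'}\mu_{k-j'}(b^{l'}U_{\mathrm{op}} - b^{l'}x)$; no further argument is required for this piece.

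The remaining task, when $x\in U$, is to verify the singleton identity $\mu_k(\{x\}) = b^{-l'}\mu_{k-j'}(\{0\})$. To compute the left-hand side I enumerate the strings $Y$ with $n(Y)/b^{|Y|}=x$: writing $X_0$ for the depth-$l$ representation of $x$, these strings are exactly the trailing-zero extensions $X_0 0^p$ for $p\geq 0$, each carrying weight $b^{-(l+p)}$. The count of $d$ in $X_0 0^p$ equals $j$ when $d>0$ (since trailing zeros are not $d$) and equals $j+p$ when $d=0$, so only specific values of $p$ contribute a given total $k$. Summing the resulting weights yields $\mu_k(\{x\})$ in closed form, and this is then compared against $b^{-l'}\mu_{k-j'}(\{0\})$ using the explicit values of the atom of $\mu_{k-j'}$ at the origin provided by Remark \ref{rem:zero}. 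The case-sensitive definition $j' = j$ for $d>0$ and $j' = j + l' - l$ for $d=0$ supplies exactly the index shift needed for the factor $b^{-l'}$ to come out correctly in both cases.

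The main obstacle is keeping the case split on whether $d=0$ or $d>0$ straight: the two behave qualitatively differently at the boundary point $x$, mirroring the dichotomy already navigated in the proof of Proposition \ref{prop:halfopen}, which is the $l'=l$ special case of the present lemma. The extension from $l'=l$ to general $l'\geq l$ amounts to inserting $l'-l$ extra trailing zeros between $X_0$ and the ``tail'' of a general contributing string and checking that the compensating shift $j'-j$ restores the scaling identity, with the direct enumeration above serving as the computational backbone.
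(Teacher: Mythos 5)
Your decomposition and overall route coincide with the paper's own proof: Lemma \ref{lem:masses} disposes of $U\setminus\{x\}$, and the only remaining work is the atom identity $\mu_k(\{x\})=b^{-l'}\mu_{k-j'}(\{0\})$, which you propose to check by enumerating the trailing-zero extensions $X_0 0^p$ and comparing with Remark \ref{rem:zero}. The difficulty is that the comparison you assert to ``come out correctly in both cases'' does not actually close when $d>0$, $k=j$ and $l'>l$. For $d>0$ every string $X_0 0^p$ has exactly $j$ occurrences of $d$, so your enumeration gives $\mu_j(\{x\})=\sum_{p\geq0}b^{-(l+p)}=b^{-l}\,\frac{b}{b-1}$, whereas the target is $b^{-l'}\mu_0(\{0\})=b^{-l'}\,\frac{b}{b-1}$; since $j'=j$ in this case, there is no index shift available to absorb the mismatch between $b^{-l}$ and $b^{-l'}$. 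Concretely, take $b=10$, $d=1$, $x=1/10$ (so $l=1$, $j=j'=1$), $l'=2$, $k=1$: the strings $1,10,100,\dots$ give $\mu_1(\{1/10\})=1/9$, while $10^{-2}\mu_0(\{0\})=1/90$. So the singleton identity, and with it the lemma as stated, fails in this corner configuration.

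For what it is worth, the paper's own proof contains the same slip: its $d\neq0$ case ends with ``multiplying this by $b^{-l'}$ we obtain indeed $\mu_j(\{x\})$,'' where the correct multiplier would have to be $b^{-l}$. The identity does hold whenever $k>j'$, or $l'=l$ (which is Proposition \ref{prop:halfopen}), or $x\notin U$, or $d=0$ (there your index shift $j'=j+l'-l$ really does make the powers of $b$ match); the sole bad case is $d>0$, $k=j$, $l'>l$, $x\in U$. In the only downstream application, Proposition \ref{prop:convergence}, one has $k\geq l'\geq l\geq j$, so $k=j$ forces $l'=l$ and the bad case never arises. A correct writeup must therefore either restrict the hypotheses accordingly or carry out your atom computation honestly and record the exceptional case, rather than asserting that both cases match.
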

\begin{proof}
  We know this already from Lemma \ref{lem:masses} if $U$ does not contain
  $x$. It remains to consider the case of $U=\{x\}$.
  \begin{itemize}
  \item If $d=0$: for $k\geq j$, there is only one string $X$ containing $k$
    occurrences of the digit $0$ and such that $n(X)/b^{|X|} = x$. It is obtained
    by adding $k-j$ trailing zeros to the string of length $l$ representing
    $b^l x$.  So $\mu_k(\{x\}) = b^{-l-(k-j)}$. On the other hand $j'=j+l'-l$
    so $k-j'=k-j+l-l'$ and the weight of $\delta_0$ in $\mu_{k-j'}$ is
    $b^{-(k-j')}$ by Remark \ref{rem:zero}. So $b^{-l'}\mu_{k-j'}(\{0\}) =
    b^{-l'-k+j'} = b^{-k+j-l}$ which matches $\mu_k(\{x\})$.
  \item If $d\neq0$: for $k=j$, strings with trailing zeros contribute to the
    weight at $x$, and the total weight is $b^{-l}\sum_{i\geq0}b^{-i}$.  On the other hand
    $j'=j$ so $k-j'=k-j=0$ and the weight of $\delta_0$ in $\mu_0$ is
    $\sum_{i\geq0}b^{-i}$. Multiplying this by $b^{-l'}$ we obtain indeed
    $\mu_j(\{x\})$.  For $k>j$, $\mu_k(\{x\})=0$.  And also
    $b^{-l'}\mu_{k-j}(\{0\})=0$.  Again the values match.\qedhere
  \end{itemize}
\end{proof}

\begin{proposition}\label{prop:convergence}
  Let $t<u$ be any two $b$-imal numbers in $[0,1]$. Let $l'$ be large enough
  for $b^{l'}t$ and $b^{l'}u$ to be integers. For $k\geq l'$ there holds
\[\mu_k([t,u))= bu - bt.\]
  Let generally $I$ be any sub-interval of $[0,1)$.  Then
  (with $|I|$ defined as $\sup I - \inf I$, i.e., the Lebesgue measure of $I$)
\[
\lim \mu_k(I) = b|I|.
\]
\end{proposition}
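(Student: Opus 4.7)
The plan is to reduce everything to Lemma \ref{lem:l'}, which already describes the restriction of $\mu_k$ to a short half-open interval $[x, x+b^{-l'})$ with $b$-imal left endpoint. First, I would establish the exact identity $\mu_k([t,u)) = b(u-t)$ for $b$-imal $t < u$ and $k \geq l'$ by partitioning
\[
[t,u) \;=\; \bigsqcup_{i=0}^{M-1} [x_i, x_i+b^{-l'}), \qquad x_i := t + ib^{-l'}, \qquad M := b^{l'}(u-t).
\]
Each $x_i$ is $b$-imal of some depth $l_i \leq l'$, and the auxiliary quantity $j'_i$ from Lemma \ref{lem:masses} is at most $l'$: for $d > 0$, $j'_i = j_i \leq l_i \leq l'$, and for $d = 0$, $j'_i = j_i + (l'-l_i) \leq l'$ because $j_i \leq l_i$. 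Hence $k \geq l'$ forces $k \geq j'_i$, so Lemma \ref{lem:l'} applied with $U = [x_i, x_i + b^{-l'})$ yields $\mu_k(U) = b^{-l'}\mu_{k-j'_i}([0,1)) = b^{-l'} \cdot b = b^{1-l'}$. Summing $M$ such contributions gives $\mu_k([t,u)) = M\cdot b^{1-l'} = b(u-t)$, which is the first assertion.

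For the convergence claim, my plan is to sandwich an arbitrary sub-interval $I \subset [0,1)$ between $b$-imal intervals. Given $\epsilon > 0$, I would pick $b$-imal numbers $t, u \in [0,1]$ with $t \leq \inf I$, $\sup I \leq u$, $I \subseteq [t,u)$, and $u - t \leq |I| + \epsilon$ (if $\sup I = 1$, take $u = 1$, noting that then $1 \notin I$ since $I \subset [0,1)$). Similarly, I would choose $b$-imal $t' \leq u'$ with $[t',u') \subseteq I$ and $u' - t' \geq |I| - \epsilon$, allowing $t' = u'$ when $|I| \leq \epsilon$. Applying the first assertion to both sandwiching intervals, for all sufficiently large $k$, monotonicity of the measure gives
\[
b(|I|-\epsilon) \;\leq\; \mu_k([t',u')) \;\leq\; \mu_k(I) \;\leq\; \mu_k([t,u)) \;\leq\; b(|I|+\epsilon),
\]
and letting $\epsilon \to 0$ will conclude.

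I do not foresee a serious obstacle. The one substantive point to verify is the uniform bound $j'_i \leq l'$ across the partition, which is what makes Lemma \ref{lem:l'} applicable at every piece; the rest is routine inner/outer approximation, with only some minor boundary bookkeeping (whether $\inf I$ or $\sup I$ lies in $I$, the case $\sup I = 1$, and the degenerate case $|I| = 0$, handled by an empty inner interval).
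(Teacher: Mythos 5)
Your proposal is correct and follows essentially the same route as the paper: reduce by additivity to the elementary $b$-imal intervals $[x_i,x_i+b^{-l'})$, apply Lemma \ref{lem:l'} after checking $j'\leq l'$, and then sandwich a general interval between inner and outer $b$-imal intervals. The only difference is presentational (you spell out the bound $j'_i\leq l'$ and the boundary bookkeeping in more detail than the paper does).
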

\begin{proof}
  Let $t=n/b^{l'}$, $u=m/b^{l'}$ for some integers $0\leq n < m\leq b^{l'}$.
  It is enough to consider the case $m=n+1$, by additivity.
  Lemma \ref{lem:l'} says in particular that $\mu_k([t,t+b^{-l'})) =
  b^{-l'}\mu_{k-j'}([0,1))$ for $k\geq j'$.  The quantity $j'$ is
  here some
  integer at most equal to $l'$. And $\mu_i([0,1))=b$ for all $i\in\NN$.
  This gives
  the result for $\mu_k([t,u))$.  Note that here perhaps
  $l'>l$, where $l$ is the smallest integer such that $b^l t\in \NN$, and
  Proposition \ref{prop:halfopen} would not have been enough to conclude in that case.

  Let $I$ be any sub-interval of $[0,1)$.  If $I$ is a singleton the statement
  is known from evaluations of $\mu_k({x})$ (which is zero if $x$ is not
  $b$-imal).  If $I$ isn't a singleton, then
  $\liminf \mu_k(I)\geq \lim \mu_k([t,u)) = bu-bt$ for any choice of $b$-imal
  numbers $t<u$ in the interior $\mathring I$.  So $\liminf \mu_k(I)\geq b|I|$ and the upper
  bound for $\limsup \mu_k(I)$ is shown similarly.
\end{proof}

\begin{theorem}\label{thm:localfarhi}
  Let $n>0$ be an integer.  Then
\[
  \lim_{k\to\infty} \;\;\sideset{}{^{(k)}}\sum_{m\text{ \upshape starts with }n}\frac1m = b \log(1 +
  \frac1n).
\]
\end{theorem}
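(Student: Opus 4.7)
The plan is to express the inner sum as an integral against the measure $\mu_{k-k(n)}$, and then pass to the limit $k\to\infty$ using either the convergence of the measures or the convergence of the complementary moments.

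First, fix $n$ and set $l = l(n)$. For all $k \geq k(n)$, Proposition \ref{prop:integralformula} (or equivalently Corollary \ref{cor:pos}) rewrites
\begin{equation*}
\sideset{}{^{(k)}}\sum_{\ld_l(m) = n} \frac{1}{m} = \int_{[0,1)} \frac{d\mu_{k - k(n)}(x)}{n+x} = \sum_{m=0}^\infty \frac{v_{k - k(n);m}}{(n+1)^{m+1}}.
\end{equation*}
Here the condition ``$m$ starts with $n$'' is interpreted as $\ld_l(m) = n$; for $k < k(n)$ the sum is empty, but we are only interested in the limit $k\to\infty$, so we may assume $k$ is large enough. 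Setting $j = k - k(n)$, the problem reduces to computing $\lim_{j\to\infty}\sum_{m=0}^\infty v_{j;m}/(n+1)^{m+1}$.

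Second, Theorem \ref{thm:mainpositive} provides both ingredients needed: $v_{j;m}\to b/(m+1)$ as $j\to\infty$, and $v_{j;m}\leq b$ uniformly. Since $\sum_{m\geq 0} b/(n+1)^{m+1}$ converges (geometrically), the Weierstrass $M$-test justifies exchanging limit and sum:
\begin{equation*}
\lim_{k\to\infty} \sideset{}{^{(k)}}\sum_{\ld_l(m)=n}\frac{1}{m} = \sum_{m=0}^\infty \frac{b}{(m+1)(n+1)^{m+1}} = b\sum_{j=1}^\infty \frac{1}{j(n+1)^j} = -b\log\left(1-\frac{1}{n+1}\right),
\end{equation*}
which equals $b\log(1+1/n)$ as claimed. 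Alternatively, one could argue directly from Proposition \ref{prop:convergence} ($\mu_j(I)\to b|I|$ on sub-intervals) by approximating $x\mapsto 1/(n+x)$ uniformly on $[0,1]$ by step functions adapted to $b$-imal partitions, but the complementary-moment route is more transparent because the dominating series is evidently summable.

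The only possible subtlety is the interchange of limit and summation, which is what makes the $v$-formulation preferable to the $u$-formulation: with the alternating series of Corollary \ref{cor:alt}, absolute convergence is not uniformly geometric near $n = 1$, whereas with the positive series of Corollary \ref{cor:pos} the geometric bound $v_{j;m}/(n+1)^{m+1} \leq b/(n+1)^{m+1}$ is immediate and base-point independent. Hence the main step is really just the identification of the limiting series as the Taylor expansion of $-\log(1 - 1/(n+1))$.
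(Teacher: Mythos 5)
Your proof is correct, but it takes a genuinely different route from the paper's own proof of Theorem~\ref{thm:localfarhi}. The paper writes the sum as $\int_{[0,1)}(n+x)^{-1}\,d\mu_{k-k(n)}(x)$ via Proposition~\ref{prop:integralformula} and then applies the weak convergence $\mu_j\to b\,dx$ of Proposition~\ref{prop:convergence} directly to the bounded continuous integrand $x\mapsto 1/(n+x)$ --- no moments appear. You instead expand the integral into the positive geometric series of Corollary~\ref{cor:pos} and pass to the limit term by term using $v_{j;m}\to b/(m+1)$ together with the uniform bound $v_{j;m}\le b$; this is essentially the argument the paper itself uses for the global statement $\lim H^{(k)}=b\log b$ in Proposition~\ref{prop:farhi} (there with $l=1$ and summed over $n=1,\dots,b-1$), localized to a single $n$. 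Both routes are sound: the measure-convergence argument is shorter once Proposition~\ref{prop:convergence} is available and needs no discussion of interchanging limits with infinite series, while your moment route is more elementary (only the Weierstrass $M$-test) and makes the appearance of the logarithm's Taylor series explicit. There is no circularity in your dependence on the limit of the $v_{j;m}$, since Proposition~\ref{prop:decreasev} establishes it independently by the recurrence argument of Proposition~\ref{prop:mainestimate}; it would be cleaner, though, to cite that proposition directly rather than Theorem~\ref{thm:mainpositive}, whose proof defers to it. Your closing observation --- that the alternating $u$-series of Corollary~\ref{cor:alt} would require extra care at $n=1$ whereas the positive $v$-series is dominated geometrically uniformly in $j$ --- is accurate and is precisely why the complementary moments are the right tool here.
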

\begin{proof}
  The condition on $m$ is $\ld_l(m)=n$, where $l$ is the number of digits in
  $n$.  Using Proposition \ref{prop:integralformula}, we are thus looking at
\[
\lim_{k\to\infty} \int_{[0,1)} \frac{\mu_k(dx)}{n+x}\;.
\]
According to the previous proposition and familiar arguments from
measure theory this limit exists and its value is
\[
\int_{[0,1)} \frac{b\,dx}{n+x} = b\log(1 + \frac 1n).\qedhere
\]
\end{proof}
\begin{remark}
  Adding these formulas for $n=1$ to $b-1$ we recover Farhi theorem
  \cite{farhi}: $\lim H^{(k)} = b\log b$.  This can also be obtained from
  taking the limit in Equation \eqref{eq:loglike}. Expanding using the power
  series for the logarithm function we obtain a series which is the limit term
  per term of the one from Corollary \ref{cor:alt}.
\end{remark}

\section{Complementary Moments}

Let $E_k(t) = \int_{[0,1)} e^{tx}\,d\mu_k(x)$ be the exponential generating
function of the moments.  Using Lemma \ref{lem:int}, one obtains:
\begin{equation*}
  E_k(bt) = \begin{cases}\frac1b\left(\sum_{a\neq d} e^{at}E_k(t) + e^{dt} E_{k-1}(t)\right)&(k>0),\\
  1 + \frac1b\sum_{a\neq d} e^{at}E_0(t) & (k=0).\end{cases}
\end{equation*}
Define $F_k(t) = e^t E_k(-t)$.  This is the exponential generating functions of
the $v_{k;m}$ ($m\geq0$) which are defined right before Corollary \ref{cor:pos}.  Setting
$d'=b-1-d$, one has:
\begin{align*}
  F_k(bt) &= \frac1b\left(\sum_{a\neq d'} e^{at}F_k(t) + e^{d't} F_{k-1}(t)\right),\\
  F_0(bt) &= e^{bt} + \frac1b\sum_{a\neq d'} e^{at}F_0(t).
\end{align*}
Thus, $v_{k;m}$ obey for $k\geq1$ the same recurrences stated in
Proposition \ref{prop:recurrence} for $u_{k;m}$, except for the
replacement of $d$ by $d'=b-1-d$.  In other terms, setting
 $\gamma_j' = \sum_{a\neq d', 0\leq a <b} a^j$, we have:
  \begin{equation}\label{eq:recurrv}
    (b^{m+1} - b +1) v_{k;m}=\sum_{j=1}^{m}\binom{m}{j}\gamma_j'v_{k;m-j}
    + \sum_{j=0}^{m}\binom{m}{j}(d')^j v_{k-1;m-j}.
  \end{equation}
And
for $k=0$ we get:
\begin{equation}\label{eq:recurrv0}
(b^{m+1} - b +1)v_{0;m} = b^{m+1} + \sum_{j=1}^m \binom{m}{j}\gamma_j'v_{0;m-j}.
\end{equation}
The previous formula is also valid for
$m=0$: it gives $v_{k;0} = b$.
\begin{proposition}\label{prop:decreasev}
  For each $m\geq1$, the sequence $(v_{k;m})_{k\in\NN}$ (which is bounded
  above by $b=v_{k;0} = u_{k;0}$) is strictly decreasing and converges to $b/(m+1)$.
\end{proposition}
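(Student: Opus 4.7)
My plan splits the claim into the limit assertion and the strict decrease. The limit $\lim_{k\to\infty} v_{k;m} = b/(m+1)$ follows immediately from Proposition \ref{prop:convergence}: the measures $\mu_k$ have common total mass $b$ on $[0,1)$ and $\mu_k(I)\to b|I|$ for every sub-interval $I$, which in view of the atomless limit $b\,dx$ amounts to convergence in distribution; applied to the continuous bounded function $(1-x)^m$ on the closed unit interval, this gives $v_{k;m}\to b\int_0^1 (1-x)^m\,dx = b/(m+1)$. (This is the alternative proof already alluded to in the proof of Theorem \ref{thm:mainpositive}.) Alternatively, once the strict decrease is in hand, the $v_{k;m}$ form a bounded monotone sequence, and passing to the limit in \eqref{eq:recurrv} yields exactly the same fixed-point relation handled in the last paragraph of the proof of Proposition \ref{prop:mainestimate} (since $\gamma_j' + (d')^j = \sum_{a=0}^{b-1} a^j$ for $j\geq 1$), identifying the limit as $b/(m+1)$ by the same induction on $m$.

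For the strict decrease in $k$, I proceed by a nested induction on $m\geq1$ and $k\geq0$, mirroring the argument for $u_{k;m}$ in Proposition \ref{prop:mainestimate} but with reversed direction. The outer hypothesis is that $v_{k;j}$ is strictly decreasing in $k$ for every $1\leq j\leq m-1$ (together with the trivial $v_{k;0}=b$). For the inner base case, I subtract the $v_{1;m}$ recurrence \eqref{eq:recurrv} from the $v_{0;m}$ recurrence \eqref{eq:recurrv0}:
\begin{equation*}
(b^{m+1}-b+1)(v_{0;m}-v_{1;m}) = \Bigl(b^{m+1}-\sum_{j=0}^m \binom{m}{j}(d')^j v_{0;m-j}\Bigr) + \sum_{j=1}^m \binom{m}{j}\gamma_j'(v_{0;m-j}-v_{1;m-j}).
\end{equation*}
The second sum is non-negative by the outer IH. Using $v_{0;l}\leq b$ and $1+d' = b-d$, the bracket is bounded below by $b\bigl(b^m-(b-d)^m\bigr)$, which is strictly positive whenever $d\geq 1$. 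For $d=0$ the naive bound collapses, but then $v_{0;l}<b$ strictly for $l\geq 1$ (since $\mu_0$ carries mass $b-1$ away from the origin), which reinjects strictness into the bound. The remaining pathological case $b=2,\,d=1$ has $v_{0;m}=2$ constant, in which event the bracket simplifies to $2^{m+1}-2>0$ for $m\geq 1$.

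For the inner inductive step $k\to k+1$ with $k\geq 1$, I subtract two instances of \eqref{eq:recurrv}:
\begin{equation*}
(b^{m+1}-b+1)(v_{k;m}-v_{k+1;m}) = \sum_{j=1}^m \binom{m}{j}\gamma_j'(v_{k;m-j}-v_{k+1;m-j}) + \sum_{j=0}^m \binom{m}{j}(d')^j(v_{k-1;m-j}-v_{k;m-j}).
\end{equation*}
Both sums are non-negative (the first by outer IH, the second by the inner IH applied at smaller $k$), and the strict positivity is carried by the $j=0$ term $v_{k-1;m}-v_{k;m}>0$ of the second sum. The main obstacle is clearly the inner base case: extracting strict positivity of the bracket $b^{m+1}-\sum_j\binom{m}{j}(d')^j v_{0;m-j}$ when $d=0$, where the clean bound $b(b-d)^m<b^{m+1}$ degenerates to an equality, and when $b=2,\,d=1$, where the uniform bound $v_{0;l}\leq b$ is saturated everywhere. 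A short case analysis covering $d\geq 1$, $d=0$, and $(b,d)=(2,1)$ dispatches these subcases; once the base is secured, the propagation from $k$ to $k+1$ requires no additional work.
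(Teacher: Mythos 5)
Your proof is correct and follows essentially the same route as the paper's: both arguments rest on comparing consecutive instances of the recurrences \eqref{eq:recurrv} and \eqref{eq:recurrv0}, with strictness entering through the inequality $\sum_{j}\binom{m}{j}(d')^{j}v_{0;m-j}<b^{m+1}$ for $m\geq1$ (secured by the same kind of case analysis around $d=0$ and $(b,d)=(2,1)$) and propagating via the $j=0$ term, and both obtain the limit $b/(m+1)$ from the weak convergence $b^{-1}\mu_k\to dx$ of Proposition \ref{prop:convergence}. The only difference is organizational: the paper first establishes $v_{k;m}<v_{0;m}$ for all $k\geq1$ and then descends one step in $k$ at a time, whereas you subtract consecutive recurrences directly inside a nested induction on $(m,k)$.
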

\begin{proof}
  First of all $v_{k;m}=\int_{[0,1)}(1-x)^m\,d\mu_k(x)\leq v_{k;0}=b$ with
  equality possible for $m\geq1$ only if all mass of $\mu_k$ is concentrated
  at the origin. This happens if and only if $b=2$ and $d=1$ and $k=0$.  Using
  the inequality we get
\[ \sum_{j=0}^{m}\binom{m}{j}(d')^j v_{k-1;m-j} \leq
   \sum_{j=0}^{m}\binom{m}{j}(b-1)^j b = b^{m+1}.
\]
For $d'=0$, the left-hand side is $v_{k-1;m}$ and it is less than $b^{m+1}$ for
$m\geq1$. For $d'>0$, we are not in the case $b=2$ and $d=1$, so
$v_{k-1;m-j}<b$ for $j<m$.  Consequently, and looking in particular at the
$j=0$ contribution, we have if $m\geq1$ a strict inequality
in the above equation.

Hence, for $k\geq1$, and $m\geq1$, from Equation \eqref{eq:recurrv}:
\[
 (b^{m+1} - b +1) v_{k;m}< \sum_{j=1}^{m}\binom{m}{j}\gamma_j'v_{k;m-j}
    + b^{m+1}.
\]
If we had equality here, this would be the same recurrence with the same
starting point as for $v_{0;m}$.  As we have an inequality, we get by
induction on $m$ $v_{k;m}\leq v_{0;m}$.  But then,
the right-hand side is $\leq (b^{m+1}-b+1)v_{0;m}$, so we get
$v_{k;m}<v_{0;m}$ for $m\geq1$.


Let $k=2$.
We obtain, from Equation \eqref{eq:recurrv}, the upper bound
\[
(b^{m+1} - b +1) v_{2;m}\leq \sum_{j=1}^{m}\binom{m}{j}\gamma_j'v_{2;m-j}
    + \sum_{j=0}^{m}\binom{m}{j}(d')^j v_{0;m-j}.
\]
If the less-than-or-equal sign was replaced with an equality, it would be the recurrence which applies to $(v_{1,m})$.  As the
two sequences have the same $m=0$ value we get $v_{2;m}\leq v_{1;m}$ for all
$m$.  But in the second sum in Equation \eqref{eq:recurrv} with $k=2$ we have the $j=0$
term which is $v_{1;m}$, and it is known to be less than $v_{0;m}$ if $m\geq1$.  So
the above displayed inequality is strict for $m\geq1$.  Using then in
the right-hand side $v_{2;m-j}\leq v_{1;m-j}$, and Equation \eqref{eq:recurrv} with
$k=1$, we get $(b^{m+1}-b+1)v_{2;m}<(b^{m+1}-b+1)v_{1;m}$ hence $v_{2;m} <
v_{1;m}$ for all $m\geq1$.

This argument can be repeated inductively and establishes that
$(v_{k;m})_{k\geq0}$ is strictly decreasing for each fixed $m\geq1$.

The value of the limit as $k\to\infty$ can be established as in the proof of
Proposition \ref{prop:mainestimate}.  Alternatively, the second paragraph of
Proposition \ref{prop:convergence} says that the sequence of probability
measures $(b^{-1}\mu_k)_{k\geq0}$ converges weakly to the Lebesgue measure on
the interval $[0,1)$. This implies the convergence of the complementary moments to those of the Lebesgue measure: $\lim_{k\to\infty} b^{-1}v_{k;m} = (m+1)^{-1}$.
\end{proof}

\begin{remark}
  For the notion of weak convergence of measures, especially of probability
  measures on the real line or some interval, see \cite[section 25]{billingsley}.
\end{remark}

\footnotesize


\begin{thebibliography}{10}
\bibitem{allouchecohenetalActa1987}
J.-P. Allouche, H.~Cohen, M.~Mend\`es~France, and J.~O. Shallit, De nouveaux
  curieux produits infinis, \textit{Acta Arith.} \textbf{49} (2) (1987),
  141--153.

\bibitem{alloucheshallit1989}
J.-P. Allouche and J.~O. Shallit, Infinite products associated with counting
  blocks in binary strings, \textit{J. Lond. Math. Soc. (2)} \textbf{39} (2)
  (1989), 193--204.

\bibitem{alloucheshallitCUP2003}
J.-P. Allouche and J.~Shallit, \textit{Automatic Sequences. Theory,
  Applications, Generalizations}, Cambridge University Press, Cambridge, 2003.

\bibitem{alloucheshallitsondow2007}
J.-P. Allouche, J.~Shallit, and J.~Sondow, Summation of series defined by
  counting blocks of digits, \textit{J. Number Theory} \textbf{123} (1) (2007),
  133--143.

\bibitem{allouchemorin2023}
J.-P. Allouche and C.~Morin, Kempner-like harmonic series, \textit{Amer.
  Math. Monthly} \textbf{131} (9) (2024), 775--783.

\bibitem{allouchehumorin2024}
J.-P. Allouche, Y.~Hu, and C.~Morin, Ellipsephic harmonic series revisited,
  \textit{Acta Math. Hungar.} \textbf{173} (2024), 461--470.

\bibitem{aloui2017}
K.~Aloui, C.~Mauduit, and M.~Mkaouar, Somme des chiffres et r\'{e}partition
  dans les classes de congruence pour les palindromes ellips\'{e}phiques,
  \textit{Acta Math. Hungar.} \textbf{151} (2) (2017), 409--455.

\bibitem{baillie1979}
R.~Baillie, Sums of reciprocals of integers missing a given digit,
  \textit{Amer. Math. Monthly} \textbf{86} (5) (1979), 372--374.

\bibitem{baillie2008}
R.~Baillie, Summing the curious series of {K}empner and {I}rwin, 2008,
  \url{https://arxiv.org/abs/0806.4410}.

\bibitem{billingsley}
P.~Billingsley, \textit{Probability and measure},
Wiley Ser. Probab. Stat., John Wiley \& Sons, Inc., Hoboken, NJ, 2012.

\bibitem{burnolkempner}
J.-F. Burnol, Moments in the exact summation of the curious series of
  {K}empner type, \textit{Amer. Math. Monthly}  \textbf{132} (10) (2025), 995--1006.

\bibitem{burnolone42}
J.-F. Burnol, Summing the ``exactly one 42'' and similar subsums of the
  harmonic series, \textit{Adv. Appl. Math.} \textbf{162}
  (2025), 102791.

\bibitem{burnollargeb}
J.-F. Burnol, Sur l'asymptotique des sommes de {K}empner pour de grandes bases,
  \textit{Publ. Math. Besan\c{c}on Alg\`ebre Th\'eorie Nr.} (to appear)
\url{https://arxiv.org/abs/2403.01957}

\bibitem{burnoldigamma}
J.-F. Burnol, Digamma function and general {F}ischer series in the theory of
  {K}empner sums, \textit{Expo. Math.} \textbf{42} (6) (2024),
  125604.

\bibitem{burnolasymptotic}
J.-F. Burnol, Un d\'eveloppement asymptotique des sommes harmoniques de
  {K}empner-{I}rwin, 2024, \url{https://arxiv.org/abs/2404.13763}.

\bibitem{burnolblocks}
J.-F. Burnol, Measures associated with certain ellipsephic harmonic series and
  the {A}llouche--{H}u--{M}orin limit theorem, \textit{Acta Math. Hungar.}
  \textbf{175} (2) (2025), 519--531.

\bibitem{farhi}
B.~Farhi, A curious result related to {K}empner's series, \textit{Amer. Math.
  Monthly} \textbf{115} (10) (2008), 933--938.

\bibitem{fischer}
H.-J. Fischer, Die {S}umme der {R}eziproken der nat\"{u}rlichen {Z}ahlen ohne
  {Z}iffer {$9$}, \textit{Elem. Math.} \textbf{48} (3) (1993), 100--106.

\bibitem{gordon2019}
R.~A. Gordon, Comments on ``{S}ubsums of the harmonic series'', \textit{Amer.
  Math. Monthly} \textbf{126} (3) (2019), 275--279.

\bibitem{hardywright}
G.~H. Hardy and E.~M. Wright, \textit{An Introduction to the Theory of
  Numbers}, Oxford University Press, Oxford, 2008.

\bibitem{huyining2016}
Y.~Hu, Patterns in numbers and infinite sums and products, \textit{J. Number
  Theory} \textbf{162} (2016), 589--600.

\bibitem{irwin}
F.~Irwin, A curious convergent series, \textit{Amer. Math. Monthly}
  \textbf{23} (5) (1916), 149--152.

\bibitem{kempner}
A.~J. Kempner, A curious convergent series, \textit{Amer. Math. Monthly}
  \textbf{21} (2) (1914), 48--50.

\bibitem{klove1971}
T.~Kl{\o}ve, Power sums of integers with missing digits, \textit{Math. Scand.}
  \textbf{28} (1971), 247--251.

\bibitem{kohlerspilker2009}
G.~K\"{o}hler and J.~Spilker, Dirichlet-{R}eihen zu {K}empners merkw\"{u}rdiger
  konvergenter {R}eihe, \textit{Math. Semesterber.} \textbf{56} (2) (2009),
  187--199.

\bibitem{lubeckponomarenko2018}
B.~Lubeck and V.~Ponomarenko, Subsums of the harmonic series, \textit{Amer.
  Math. Monthly} \textbf{125} (4) (2018), 351--355.

\bibitem{mukherjeesarkar2021}
R.~Mukherjee and N.~Sarkar, A short note on a curious convergent series,
  \textit{Asian-Eur. J. Math.} \textbf{14} (9) (2021), Paper No. 2150158, 3.

\bibitem{nathanson2021integers}
M.~B. Nathanson, Curious convergent series of integers with missing digits,
  \textit{Integers} \textbf{21A} (2021), Paper No. A18, 9.

\bibitem{nathanson2022ramanujan}
M.~B. Nathanson, Convergent series of integers with missing digits,
  \textit{Ramanujan J.} \textbf{58} (2) (2022), 667--676.

\bibitem{schmelzerbaillie}
T.~Schmelzer and R.~Baillie, Summing a curious, slowly convergent series,
  \textit{Amer. Math. Monthly} \textbf{115} (6) (2008), 525--540.

\bibitem{segalleppfine1970}
A.~C. Segal, B.~Lepp, and N.~J. Fine, Problems and {S}olutions: {S}olutions of
  {E}lementary {P}roblems: {E}2204, \textit{Amer. Math. Monthly} \textbf{77}
  (9) (1970), 1009--1010.

\bibitem{wadhwa1978}
A.~D. Wadhwa, Some convergent subseries of the harmonic series, \textit{Amer.
  Math. Monthly} \textbf{85} (8) (1978), 661--663.

\bibitem{walker2020}
A.~Walker and A.~Walker, Arithmetic progressions with restricted digits,
  \textit{Amer. Math. Monthly} \textbf{127} (2) (2020), 140--150.

\end{thebibliography}
\end{document}